\newcommand{\email}[1]{\href{mailto:#1}{#1}}
\title{Eventual Conjugacy of Free Inert $G$-SFTs}
\author{Jeremias Epperlein\thanks{University of Passau,
  Department of Computer Science and Mathematics,
  94032 Passau, Germany,
\email{jeremias.epperlein@uni-passau.de}}}
\date{\today}
\newtheorem{thm}{Theorem}[section]
\newtheorem{lem}[thm]{Lemma}
\newtheorem{defn}[thm]{Definition}
\newtheorem{prop}[thm]{Proposition}
\newtheorem{exam}[thm]{Example}
\newtheorem{quest}[thm]{Question}
\newtheorem{cor}[thm]{Corollary}
\newtheorem{rem}[thm]{Remark}
   \newtheoremstyle{TheoremNum}
        {\topsep}{\topsep}              
        {\itshape}                      
        {}                              
        {\bfseries}                     
        {.}                             
        { }                             
        {\thmname{#1}\thmnote{ #3}}
    \theoremstyle{TheoremNum}
    \newtheorem{thmn}{Theorem}
    \newtheorem{corn}{Corollary}
\newcommand{\blockmat}[1]{  \left(\begin{array}{@{}c|c@{}}#1\end{array}\right)}
\newcommand{\setsep}{\:|\:}
\newcommand{\bbN}{\mathbb{N}}
\newcommand{\bbZ}{\mathbb{Z}}
\newcommand{\bbZP}{\mathbb{Z}_+}
\newcommand{\bbC}{\mathbb{C}}
\newcommand{\calG}{\mathcal{G}}
\newcommand{\graph}{\calG}
\newcommand{\calA}{\mathcal{A}}
\newcommand{\calE}{\mathcal{E}}
\newcommand{\calS}{\mathcal{S}}
\newcommand{\calR}{\mathcal{R}}
\newcommand{\mat}[1]{\begin{pmatrix}#1\end{pmatrix}}
\newcommand{\init}{\mathrm{i}}
\newcommand{\term}{\mathrm{t}}
\newcommand{\abs}[1]{|#1|}
\newcommand{\ones}{\mathbbm{1}}
\DeclareMathOperator{\EL}{EL}
\DeclareMathOperator{\GL}{GL}
\DeclareMathOperator{\SL}{SL}
\DeclareMathOperator{\Aut}{Aut}
\DeclareMathOperator{\id}{id}
\DeclareMathOperator{\Per}{Per}
\begin{document}
\maketitle
\setlength{\epigraphrule}{0pt}
\epigraph{%
  Pray that it is inert.
}{-- Kolvoord, \textit{The Expanse S3E7}}
\begin{abstract}
  The action of a finite group $G$ on a subshift of finite type $X$ is
  called free,
  if every point has trivial stabilizer, and it is called inert,
  if the induced action on the dimension group of $X$ is trivial.
  
  We show that any two free inert actions of a finite group $G$ on
  an SFT are conjugate by an automorphism
  of any sufficiently high power of the shift space.
  This partially answers a question posed by Fiebig.  
  As a consequence we obtain that every
  two free elements of the stabilized
  automorphism group of a full shift are
  conjugate in this group.
  In addition, we generalize a result of Boyle, Carlsen and Eilers
  concerning the flow equivalence of $G$-SFTs.
\end{abstract}

\section{Introduction and Main Results}
\label{sec:introduction}

On the two-sided full shift with two symbols, i.e. $\{0,1\}^\bbZ$, there is a
natural involution $\tau$ defined by swapping the symbols $0$ and $1$.
It is a free involution in the sense that it does not fix any point.
We can easily generate more such free involutions in the
automorphism group of the full $2$-shift: just conjugate $\tau$ by
an arbitrary, not necessarily free, automorphism.

But are there any other free involutions? Or stated more explicitly:
\begin{quest}
  \label{quest:1}
Are any two fixed-point-free involutions of the full $2$-shift
conjugate by an automorphism of the full $2$-shift?
\end{quest}
This is an old question in symbolic dynamics
that was formulated explicitly for example in \cite[p. 492]{fiebigPeriodicPointsFinite1993b}

Given a finite group $G$, a construction similar to the one above
gives a free action\footnote{An action of a group is called
  \emph{free}, if every point has trivial stabilizer.} of $G$ on the full shift with $|G|$ symbols:
The group $G$ acts on the set $G$ by left multiplication and this
induces a free action on $G^\bbZ$. Again we can obtain other
free $G$-actions on $G^\bbZ$ by conjugation with an arbitrary automorphism.

One hint that the answer to \Cref{quest:1} might be \enquote{yes}
is the fact that it seems very hard to construct free automorphisms
of a given SFT by different means. Marker constructions, see e.g.
\cite[Section 2]{boyleAutomorphismGroupShift1988} or
\cite[Chapter 3]{kitchensSymbolicDynamicsOnesided1998} are the classical source of the
very rich structure of the automorphism group of a subshift of finite
type. But this type of construction rarely produces fixed-point-free
automorphisms.  Additionally we don't know how to decide if there even exists
a free action of $\bbZ/p\bbZ$ on a given SFT by automorphisms.  This
question appears explicitly in \cite[Section
8]{boyleWorkKimRoush2013} as \enquote{a question for another generation.} We will
explain in \Cref{sec:kim-roush} how a partial
result in this direction by Kim and Roush  from
\cite{kimFreeSbActions1997} fits together with
the results in this paper.

While it is hard to construct a free action on a given SFT, there are
methods to construct an abundance of SFTs with free $G$-actions. On
one hand, one can construct them as $G$-extensions of an SFT, see for
example \cite{boyleEquivariantFlowEquivalence2005} as well as
\Cref{sec:matrices-over-integer-group-rings}. On the other hand, one
can start with a not necessarily free $G$-action on an SFT $X$ and remove all
points $x$ from $X$ for which $(gx)_i=x_i$ for some $i \in \bbZ$ and
$g \in G \setminus \{e\}$.  Up to topological conjugacy of the
underlying SFT, every free $G$-action on an SFT can be produced by
both methods, see \Cref{thm:franks-repr} and
\Cref{prop:repr-by-group-ring-matrix}.

There is a simple obstruction for two free $G$-actions
on a subshift to be conjugate as the following example shows.
\begin{exam}[Obstruction to topological conjugacy]\label{ex:000}
  Let $X=\{0,1\}^\bbZ$ be the full $2$-shift and let $Y$ be the subshift
  of $X$ obtained by forbidding the words $000$ and $111$.
  \Cref{fig:graph-000} shows the $3$-higher block representation of $Y$
  as an edge shift.

  \begin{figure}
    \begin{center}
    \begin{tikzpicture}
      \node (01) at (-1,0) {$01$};
      \node (10) at (1,0) {$10$};
      \node (00) at (0,1) {$00$};
      \node (11) at (0,-1) {$11$};
      \draw[-latex] (00) edge[bend right] (01);
      \draw[-latex] (01) edge[bend right] (10);
      \draw[-latex] (10) edge[bend right] (01);
      \draw[-latex] (01) edge[bend right] (11);
      \draw[-latex] (10) edge[bend right] (00);
      \draw[-latex] (11) edge[bend right] (10);
    \end{tikzpicture}
    \caption{The subshift $Y$
      from \Cref{ex:000}.    The automorphism induced by $\tau$ corresponds to a point
    reflection of this graph across its center.
  }
  \label{fig:graph-000}
    \end{center}
  \end{figure}
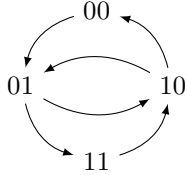
  Let $\tau: X \to X$ be the map induced by the unique free involution
  of $\{0,1\}$.
  The subshift $Y$ is invariant under $\tau$.
  Set $Z:=X \times Y$. Consider the two
  automorphisms of $Z$ given by $\varphi=\id_X \times \tau_{|Y}$
  and $\psi = \tau \times \id_Y$.
  Both automorphisms have no fixed points, since no point is fixed by
  $\tau$. Because automorphisms of subshifts by definition commute with the shift map,
  we also get a shift action on the orbit spaces\footnote{The \emph{orbit
    space}
    is the topological space  obtained by identifying points in every
    orbit, endowed with the
    quotient topology.}
  $Z/\varphi \cong X \times Y/\tau_{|Y}$ and $Z/\psi \cong X/\tau
  \times Y$. Since the actions are free, the orbit
  spaces with the induced shift action are again
  SFTs, see \cite[Theorem 4.1]{silverInvariantFiniteGroup2005}. Notice that this would not necessarily
  be the case if the actions were not free,
  as the shift on the orbit space in this case need not be expansive.
  Now one can easily calculate, see \Cref{exam:extension-and-augmentation}, that $X/\tau \cong X$
  but $Y/\tau_{|Y} \cong U \not\cong Y$ where
  $U$ is the golden mean shift, i.e. the subshift of $X$
  with forbidden word $11$.
  In particular $Z/\varphi$ has two $\sigma$-fixed points,
  while $Z/\psi$ has none. But if $\varphi$
  and $\psi$ were topologically conjugate automorphisms of $Z$, their
  orbit spaces would have to be conjugate shift spaces.

  The deeper reason that makes this example work, is the fact that $\tau$ is inert, i.e. it acts
  trivially on the dimension group of $X$, but the restriction
  $\tau_{|Y}$ is not inert. To see this,
  consider the adjacency matrix of the graph depicted
  in \Cref{fig:graph-000} (with the vertices ordered lexicographically):
  \begin{align*}
   A=\begin{pmatrix}
      0 & 1 & 0 & 0 \\
      0 & 0 & 1 & 1 \\
      1 & 1 & 0 & 0 \\
      0 & 0 & 1 & 0
    \end{pmatrix}
  \end{align*}
  Since $\abs{\det(A)}=1$,
  the dimension group of $Y$ is isomorphic to $\bbZ^4$.
  As we will see in \Cref{sec:finite groups acting freely}, $\tau$ acts on this dimension group 
  by multiplication from the right with the permutation matrix 
  \begin{align*}
    \begin{pmatrix}
      0 & 0 & 0 & 1 \\
      0 & 0 & 1 & 0 \\
      0 & 1 & 0 & 0 \\
      1 & 0 & 0 & 0
    \end{pmatrix}.
  \end{align*}
  Hence $\tau_{|Y}$ is not inert.
  This carries over to the product maps
  $\psi$, which is inert, and $\varphi$, which is not.
  Hence the actions of $\psi$ and $\varphi$ on the dimension
  group are not conjugate and therefore $\psi$ and $\varphi$
  are themselves not conjugate in $\Aut(Z,\sigma)$.
\end{exam}

In light of this example, a natural extension of \Cref{quest:1}
is:
\begin{quest}[Topological conjugacy of inert free actions]
  \label{quest:2}
  Let $X$ be a subshift of finite type and $G$ a finite group. Are any two actions
  of $G$ on $X$ by free inert automorphisms conjugate by an automorphism?
  What about the case that $X$ is a full shift and $G = \bbZ/p\bbZ$?
\end{quest}
It is not totally unreasonable to suspect that this might be true.
For example, two actions of a finite group $G$ acting freely on a
finite set $M$ are always conjugate. Similarly, any two free actions of $G$
on a Cantor space $C$ by homeomorphisms are automatically conjugate
by a self-homeomorphism of $C$.

While we will not be able to answer \Cref{quest:2}, we give a positive
answer to a weaker version.

\begin{thmn}[\ref{thm:main} \textnormal{(Main theorem - dynamical version)}]
  Let $(Y_1,\sigma)$ and $(Y_2,\sigma)$ be
  SFTs such that $(Y_1,\sigma^\ell)$, $(Y_2,\sigma^\ell)$ are
  conjugate
  for all sufficiently large $\ell$.
  Let $\alpha_1$ and $\alpha_2$ be free inert actions of a finite
  group $G$ on $(Y_1,\sigma)$ and $(Y_2,\sigma)$, respectively.
  Then for every sufficiently large $\ell$
  there is a conjugacy $\varphi$ from $(Y_1,\sigma^\ell)$
  to $(Y_2,\sigma^\ell)$ such that $\varphi \circ \alpha_1(g) =
  \alpha_2(g) \circ \varphi$ for all $g \in G$. 
\end{thmn}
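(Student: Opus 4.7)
The plan is to translate the dynamical statement into a matrix-theoretic one over the integral group ring $\bbZ G$ and then to apply an equivariant Williams-type theorem.

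First, by \Cref{prop:repr-by-group-ring-matrix}, every free $G$-SFT is, up to equivariant conjugacy, the edge shift of a finite graph carrying a free $G$-action, and such a graph is encoded by a square matrix over the positive cone $\bbZP G$ of the integral group ring. Under this correspondence the underlying SFT has adjacency matrix $\epsilon(A)$, where $\epsilon\colon \bbZ G \to \bbZ$ is the augmentation summing coefficients. I would choose such representing matrices $A_1$ and $A_2$ over $\bbZP G$ for $(X_1,\sigma,\alpha_1)$ and $(X_2,\sigma,\alpha_2)$, so that the eventual conjugacy assumption becomes a statement about the scalar matrices $\epsilon(A_1), \epsilon(A_2)$.

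Next, I would establish an equivariant Williams-type theorem: two matrices $A_1, A_2$ over $\bbZP G$ are shift equivalent over $\bbZ G$ (with $\bbZP G$-valued lag) if and only if the associated $G$-SFTs become equivariantly conjugate at all sufficiently large shift powers. This should follow by running the classical out-/in-splitting arguments on the underlying $G$-graphs, where freeness ensures that each elementary splitting descends uniquely to the quotient graph and lifts back $G$-equivariantly. The theorem is thereby reduced to producing $R, S$ over $\bbZP G$ with $SR = A_1^\ell$, $RS = A_2^\ell$, $A_1 R = R A_2$, and $S A_1 = A_2 S$ for some sufficiently large $\ell$.

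The eventual conjugacy hypothesis, combined with the classical Kim--Roush theorem, already yields scalar matrices $\bar R, \bar S$ over $\bbZP$ witnessing shift equivalence of $\epsilon(A_1)$ and $\epsilon(A_2)$. The role of inertness is to lift this scalar shift equivalence to one over $\bbZ G$. Since inertness means $G$ acts trivially on the Krieger dimension groups of $X_1$ and $X_2$, I expect the $\bbZ G$-dimension modules of $A_1$ and $A_2$ to split, in a controlled way, as trivial $\bbZ G$-extensions of the dimension modules of the underlying SFTs. Given such a decomposition, the pair $\bar R, \bar S$ should be assemblable into a $\bbZP G$-shift equivalence of $A_1$ and $A_2$, after replacing $\ell$ by a suitable multiple.

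The step I expect to be hardest is this lift. The shift-equivalence identities are multiplicative relations in the matrix ring, and no naive substitution of $\bar R, \bar S$ into $\bbZ G$ will satisfy them; the interplay of freeness (giving positivity in $\bbZP G$) and inertness (trivializing the $G$-action on dimension data) must be combined, plausibly via a positive $K$-theory argument in the spirit of Boyle--Wagoner adapted to matrices over the integral group ring, to correct the error terms and produce an honest $\bbZ G$-shift equivalence. Once that is secured, the equivariant Williams theorem of the second paragraph delivers the desired equivariant conjugacy at every sufficiently large power.
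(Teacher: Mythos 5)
Your skeleton matches the paper's: represent the two actions by square matrices $B_1,B_2$ over $\bbZP[G]$ via \Cref{prop:repr-by-group-ring-matrix}, reduce everything to shift equivalence of $B_1$ and $B_2$ over $\bbZP[G]$, and invoke the equivariant Williams/Kim--Roush theorem (\Cref{thm:gr-se-eventual-conjugacy}) to translate back into eventual $G$-conjugacy. However, there is first a bookkeeping error that hides one of the two uses of inertness: under this representation the \emph{total space} $X_i$ is the edge shift of the extension $\calE(B_i)$, a $(V_i\times G)\times(V_i\times G)$ matrix, while the augmentation $\calA(B_i)$ is the adjacency matrix of the \emph{quotient} $X_i/G$. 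The eventual-conjugacy hypothesis therefore gives shift equivalence over $\bbZP$ of the extensions $\calE(B_1),\calE(B_2)$, not of the augmentations as you claim. Getting from the extensions to the augmentations already requires inertness: by \Cref{thm:charact-inert}, $B$ is inert if and only if $\calE(B)$ and $\calA(B)$ are shift equivalent over $\bbZP$.

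The substantive gap is the lift itself, which you explicitly leave as a hope (\enquote{should be assemblable}, \enquote{plausibly via a positive $K$-theory argument}). The paper's mechanism is concrete and rests on the characterization you are missing: $B$ is inert iff $B^\ell \in u_G\,\bbZP^{V\times V}$ for all sufficiently large $\ell$, where $u_G=\sum_{g\in G}g$ (\Cref{thm:charact-inert}\,(iii)). This divisibility makes the lift essentially a one-line computation (\Cref{thm:lift-SE}, due to Boyle--Schmieding): writing $B^k=u_G\cdot\tfrac{1}{|G|}\calA(B^k)$ and $C^k=u_G\cdot\tfrac{1}{|G|}\calA(C^k)$, a scalar shift equivalence $(R,S)$ of lag $k$ between the augmentations is promoted to the lag-$2k$ shift equivalence $\bigl(u_G R,\ \tfrac{1}{|G|}\calA(C^k)S\bigr)$ over $\bbZP[G]$, using $u_G B=u_G\calA(B)$. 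Your proposed splitting of the $\bbZ[G]$-dimension modules does not by itself produce the required matrices $R,S$ with entries in $\bbZP[G]$, and no $K$-theoretic correction is needed once the $u_G$-divisibility is in hand. As written, the central step of your proof is missing.
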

So instead of conjugacy of the actions we are only able to obtain eventual conjugacy.

Our main tool will be the integral group ring formalism for $G$-SFTs
developed by Parry, see e.g. \cite{boyleEquivariantFlowEquivalence2005}
and \Cref{sec:matrices-over-integer-group-rings}
together with a characterization of inert $G$-SFTs, 
see \Cref{thm:charact-inert}

In this algebraic formulation our main theorem becomes:
\begin{thmn}[\ref{thm:main-algebraic-version} \textnormal{(Main theorem - algebraic version)}]
  Every pair of inert square matrices over $\bbZP[G]$,
  whose augmentations are shift equivalent over $\bbZP$, is shift equivalent over $\bbZP[G]$ itself.
\end{thmn}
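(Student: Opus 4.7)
The strategy is to deduce this algebraic statement from the dynamical version (\Cref{thm:main}) via Parry's dictionary between matrices over $\bbZP[G]$ and free $G$-SFTs. By the framework of \Cref{sec:matrices-over-integer-group-rings} (in particular \Cref{prop:repr-by-group-ring-matrix}), each inert square matrix $A_i$ over $\bbZP[G]$ represents a free inert $G$-SFT $(X_i, \sigma, \alpha_i)$ whose cover $X_i$ has adjacency matrix obtained by unfolding $A_i$ via the regular representation of $G$, while its quotient $X_i/G$ has adjacency matrix $\epsilon(A_i)$. In this dictionary, SE over $\bbZP[G]$ corresponds to eventual $G$-equivariant conjugacy of the covers, which is the $G$-equivariant analogue of Williams' theorem.

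Next I would bridge the hypothesis on augmentations to a hypothesis on covers. By Williams' theorem, SE of $\epsilon(A_1)$ and $\epsilon(A_2)$ over $\bbZP$ means that $X_1/G$ and $X_2/G$ are eventually conjugate. Using the characterization of inert actions (\Cref{thm:charact-inert}), I would show that under inertness SE of augmentations lifts to SE over $\bbZP$ of the cover matrices themselves, i.e., $X_1$ and $X_2$ are eventually conjugate as SFTs. Intuitively, triviality of the $G$-action on the dimension group collapses the non-trivial isotypic components, so the dimension data of the cover is essentially captured by that of the quotient.

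At this point the hypotheses of the dynamical main theorem are met, yielding a $G$-equivariant conjugacy $\varphi : (X_1, \sigma^\ell) \to (X_2, \sigma^\ell)$ for every sufficiently large $\ell$. Translating back via the dictionary produces a strong shift equivalence between $A_1^\ell$ and $A_2^\ell$ over $\bbZP[G]$, and since $A$ and $A^\ell$ are always SE this gives SE of $A_1$ and $A_2$ over $\bbZP[G]$ as required.

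\textbf{Main obstacle.} The delicate step is the augmentation-to-cover transfer. It is here that \Cref{thm:charact-inert} is used essentially: without inertness, distinct $G$-covers of the same quotient SFT can have non-isomorphic dimension triples, and no such lift would be available. Depending on the precise form of \Cref{thm:charact-inert} --- whether it provides an algebraic normal form (e.g.\ SSE over $\bbZP[G]$ to a matrix with scalar entries in $\bbZP$) or instead a more abstract dimension-theoretic condition --- this step is either nearly immediate or requires explicit work with the isotypic decomposition of the dimension module.
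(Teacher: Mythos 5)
Your proposal is essentially circular rather than an independent proof. The algebraic and dynamical versions (\Cref{thm:main-algebraic-version} and \Cref{thm:main}) are two formulations of the same result, translated into one another by exactly the dictionary you invoke (\Cref{prop:repr-by-group-ring-matrix} together with \Cref{thm:gr-se-eventual-conjugacy}); in the paper the algebraic version is proved first, directly, and the dynamical version is then deduced from the same two ingredients. By routing the argument through \Cref{thm:main} you defer the entire content to a black box, and the one step you do carry out --- upgrading SE of the augmentations to SE of the extension matrices $\calE(A_i)$ over $\bbZP$, which indeed follows from condition (iv) of \Cref{thm:charact-inert} --- is not where the difficulty lies. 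The whole theorem is the jump from \enquote{the cover matrices are shift equivalent over $\bbZP$} to \enquote{the matrices are shift equivalent over the group ring $\bbZP[G]$}, and your outline never addresses that jump; it simply relabels it as an application of \Cref{thm:main}.

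The two ingredients actually used by the paper are: (a) condition (iii) of \Cref{thm:charact-inert}, namely that inertness of $B$ is equivalent to $B^\ell \in u_G\,\bbZP^{V\times V}$ for all sufficiently large $\ell$; and (b) the Boyle--Schmieding lifting lemma (\Cref{thm:lift-SE}), which states that for two such matrices a shift equivalence $(R,S)$ of the augmentations over $\bbZP$ lifts explicitly to the shift equivalence $(u_G R,\ \tfrac{1}{|G|}\calA(C^k)S)$ over $\bbZP[G]$, the key identity being $u_G B = u_G\,\calA(B)$. With these two facts the proof is two lines and needs no dynamics at all. A separate small error: your final step asserts that $A$ and $A^\ell$ are always SE, which is false (for instance the $1\times 1$ matrices $(2)$ and $(4)$ are not SE over $\bbZP$, having different entropies); the correct passage from eventual $G$-conjugacy back to SE over $\bbZP[G]$ is the equivalence recorded in \Cref{thm:gr-se-eventual-conjugacy}, which you do cite earlier in the dictionary paragraph.
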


Regarding \Cref{quest:1} we obtain the following corollary.
\begin{corn}[\ref{thm:main-corr-classic-version} \textnormal{(Eventual
    conjugacy for free finite order automorphisms of full shifts)}]
  Let $k,m \in \bbN$.
  Let $(X_k,\sigma)$ be the full $k$-shift and 
  let $\varphi_1,\varphi_2$ be two automorphisms
  of $(X_k,\sigma)$ for which every orbit has size $m$.
  Then there is a homeomorphism $\psi: X_k \to X_k$
  such that  $\psi \circ \varphi_1 = \varphi_2 \circ \psi$
  and $\psi \circ \sigma^\ell = \sigma^\ell
  \circ \psi$ for all sufficiently large $\ell \in \bbN$.
\end{corn}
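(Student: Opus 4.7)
The plan is to derive this corollary as a direct application of the main theorem (\Cref{thm:main}). Set $G = \bbZ/m\bbZ$ and let $\alpha_i : G \to \Aut(X_k,\sigma)$ be the homomorphism sending the generator of $G$ to $\varphi_i$. Since $X_1 = X_2 = X_k$, the eventual-conjugacy hypothesis $(X_1,\sigma^\ell) \cong (X_2,\sigma^\ell)$ of \Cref{thm:main} is trivially satisfied, so what remains is to verify that each $\alpha_i$ is a free inert action of $G$.

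Freeness is essentially given: because every $\varphi_i$-orbit has size exactly $m$, we have $\varphi_i^m = \id$ globally, so $\alpha_i$ is a well-defined action, and every $\alpha_i$-orbit has size $m = |G|$. For inertness I would invoke the general fact that finite-order automorphisms of a full shift are automatically inert. The dimension group of $(X_k,\sigma)$ is $(\bbZ[1/k], \bbZ[1/k]_+, \times k)$, and any order-preserving automorphism commuting with multiplication by $k$ is multiplication by a positive unit of $\bbZ[1/k]$, hence lies in the torsion-free group $\{k^n \setsep n \in \bbZ\}$. The image of a finite-order $\varphi_i$ under the dimension representation is therefore forced to be the identity, so $\varphi_i$ is inert and consequently so is the whole action $\alpha_i$.

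Feeding the pair of free inert actions $(\alpha_1,\alpha_2)$ into \Cref{thm:main} produces, for all sufficiently large $\ell$, a conjugacy $\psi : (X_k,\sigma^\ell) \to (X_k,\sigma^\ell)$ satisfying $\psi \circ \alpha_1(g) = \alpha_2(g) \circ \psi$ for every $g \in G$. Specializing $g$ to the generator yields $\psi \circ \varphi_1 = \varphi_2 \circ \psi$, while the statement that $\psi$ is a conjugacy of $\sigma^\ell$-systems is precisely $\psi \circ \sigma^\ell = \sigma^\ell \circ \psi$. The main obstacle is of course the main theorem itself; once that is granted, the only additional content is the inertness verification for finite-order automorphisms of full shifts, which is the short dimension-group computation sketched above.
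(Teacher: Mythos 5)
Your proof is correct and follows essentially the same route as the paper: reduce to a free $\bbZ/m\bbZ$-action, observe that finite-order automorphisms of a full shift are inert because the automorphism group of the dimension triple is torsion-free, and then apply \Cref{thm:main}. (One minor imprecision: the group of positive units of $\bbZ[1/k]$ is $\bbZ^d$ with $d$ the number of distinct prime divisors of $k$, not just $\{k^n \setsep n \in \bbZ\}$; but torsion-freeness, which is all you use, holds either way.)
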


Maybe the most natural setting for this version of eventual conjugacy
is the stabilized automorphism group of a subshift introduced by
Hartman, Kra and Schmieding in
\cite{hartmanStabilizedAutomorphismGroup2021}.
In this setting our main result reads:
\begin{corn}[\ref{cor:main-corr-stabilized-version} \textnormal{(Conjugacy of free finite order elements in the stabilized automorphism group)}]
  \label{cor:main-corr-stabilized-version}
   Let $k,m \in \bbN$.
   Let $(X_k,\sigma)$ be the full $k$-shift.
   Let $\varphi_1, \varphi_2$ be two elements of the
   stabilized automorphism group of $(X_k,\sigma)$
   for which every orbit has size $m$.
   Then $\varphi_1$ and $\varphi_2$ are conjugate
   in $\Aut^\infty(X_k,\sigma)$.
\end{corn}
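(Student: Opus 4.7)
The plan is a direct reduction to \Cref{thm:main-corr-classic-version} by unfolding the definition of the stabilized automorphism group and passing to a suitable common power of the shift in which both $\varphi_1$ and $\varphi_2$ become honest automorphisms.

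First I would use that, by definition, $\Aut^\infty(X_k,\sigma) = \bigcup_{n \geq 1} \Aut(X_k,\sigma^n)$, so there exist $n_1, n_2 \in \bbN$ with $\varphi_i \in \Aut(X_k,\sigma^{n_i})$. Setting $N := \operatorname{lcm}(n_1,n_2)$, both $\varphi_1$ and $\varphi_2$ commute with $\sigma^N$ and hence lie in $\Aut(X_k,\sigma^N)$. The dynamical system $(X_k,\sigma^N)$ is topologically conjugate, via the standard $N$-block recoding $(x_i)_{i\in\bbZ}\mapsto\bigl((x_{Ni},\dots,x_{Ni+N-1})\bigr)_{i\in\bbZ}$, to the full shift $(X_{k^N},\sigma)$ on $k^N$ symbols. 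Since this recoding is a homeomorphism of the underlying space, the set-theoretic orbit structure of each $\varphi_i$ is unaltered, so under the conjugacy we obtain two elements of $\Aut(X_{k^N},\sigma)$ for which every orbit still has size $m$.

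At this point I would apply \Cref{thm:main-corr-classic-version} inside the full $k^N$-shift to produce a homeomorphism $\psi$ of $X_{k^N}$ with $\psi\circ\varphi_1 = \varphi_2\circ\psi$ which commutes with $\sigma^\ell$ for all sufficiently large $\ell$. Transporting $\psi$ back through the $N$-block conjugacy yields a homeomorphism of $X_k$ that intertwines $\varphi_1$ with $\varphi_2$ and commutes with $\sigma^{N\ell}$ for all large $\ell$, so in particular with some power of the shift; hence it lies in $\Aut^\infty(X_k,\sigma)$, which is exactly the desired statement.

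No genuine obstacle appears: once \Cref{thm:main-corr-classic-version} is in hand the corollary is a formal consequence of it combined with the observation that $(X_k,\sigma^N)$ is again a full shift up to conjugacy. The only point that deserves a line of care is that the orbit-size hypothesis is purely a property of the maps $\varphi_i$ as homeomorphisms of the space and is therefore preserved by the recoding; after this observation the argument reduces to bookkeeping around the direct-limit definition of $\Aut^\infty$.
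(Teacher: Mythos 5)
Your proposal is correct and is essentially the paper's own argument, just written out in more detail: the paper likewise picks a common power $N$ so that $\varphi_1,\varphi_2\in\Aut(X_k,\sigma^N)$, observes that $(X_k,\sigma^N)$ is again a full shift with the orbit sizes unchanged, and invokes \Cref{thm:main-corr-classic-version}. The extra care you take with the block recoding and with transporting $\psi$ back is exactly the bookkeeping the paper leaves implicit.
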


We conclude the introduction with
some pointers to related works.
The one-sided version of \Cref{quest:2} in the
case of $\bbZ/k \bbZ$ acting on $(\{1,\dots,k\}^\bbN,\sigma)$ was answered
positively for $k$ prime in \cite[Lemma 3.3
(ii)]{boyleAutomorphismsOnesidedSubshifts1990}
and very recently for $k$ arbitrary in \cite[Theorem 1.2]{bleakConjugacyCertainAutomorphisms2023}.
Notice however, that the automorphism groups of one-sided
SFTs are significantly less rich than
their two-sided counterparts.

One can also consider other weakenings of conjugacy besides eventual conjugacy.
For a classification of $G$-SFTs up to finite equivalence see \cite{adlerFiniteGroupActions1985}.

Boyle, Carlsen and Eilers \cite{boyleFlowEquivalenceGSFTs2020a}, based
on earlier work of Boyle and Sullivan \cite{boyleEquivariantFlowEquivalence2005}
in the irreducible case, classified $G$-SFTs
up to $G$-flow equivalence. As an application
they showed a flow equivalence version of
\Cref{thm:main-corr-classic-version}.
We will show in \Cref{sec:equivariant-flow-equivalence}
how a generalization of this result can be recovered as a corollary of our results.

The paper is organized as follows. We recall a few definitions and notations from
symbolic dynamics in \Cref{sec:notation}. In \Cref{sec:finite groups
  acting freely} we discuss basic results regarding finite groups
acting on subshifts. The integral group ring formalism due to Parry
used to define $G$-extensions of SFTs is explained in
\Cref{sec:matrices-over-integer-group-rings}.
In \Cref{sec:inert-g-sfts} we use the results of the previous
two sections to give a characterization of inert $G$-SFTs,
which in turn leads to the proofs of our main theorems in
\Cref{thm:main-algebraic-version}.
In \Cref{sec:kim-roush} we put a related result
of Kim and Roush into our context and we finish
in \Cref{sec:equivariant-flow-equivalence}
with an application of our main results to
equivariant flow equivalence of $G$-SFTs.

\section{Preliminaries}
\label{sec:notation}

All necessary material concerning subshifts of finite type
can be found in \cite{lindIntroductionSymbolicDynamics2021}.
This section mainly serves to introduce notation.
Our natural numbers $\bbN$ start at $1$
and we denote the non-negative integers by $\bbZP$.
In this paper a \emph{topological dynamical system} $(X,f)$ consists
of a
compact metrizable state space $X$ and a continuous self map $f:X
\to X$. The \emph{full shift} over the \emph{alphabet} $\calS$ is the topological
dynamical system $(\calS^\bbZ,\sigma)$ together with the left
shift $\sigma: \calS^\bbZ \to \calS^\bbZ, \sigma(x)_i = x_{i+1}$, where $\calS$ is endowed
with
the discrete topology and $\calS^\bbZ$ with the product topology.
The elements of $\calS^\bbZ$ are called \emph{configurations}.
A \emph{subshift} is a subsystem
of a full shift. A subshift $X$ is of \emph{finite type}
if there is a finite set of forbidden words, such that
$X$ consists of all configurations not containing any of these words.
An isomorphism between two topological dynamical
system $(X,f)$ and $(Y,g)$
is a homeomorphism $\varphi: X \to Y$ such that
$\varphi \circ f = g \circ \varphi$.
We denote the group of automorphisms of
the topological dynamical system $(X,f)$ by
$\Aut(X,f)$.
Following \cite{hartmanStabilizedAutomorphismGroup2021},
we call $\Aut^{\infty}(X,f) = \bigcup_{k \in \bbN} \Aut(X,f^k)$
the \emph{stabilized automorphism group} of $(X,f)$.

If $(X,\sigma)$ is a subshift over the alphabet $\calS$,
then we can consider $(X,\sigma^k)$ as a
subshift over the alphabet $\calS^k$.
We say that two subshifts $(X,\sigma)$ and $(Y,\sigma)$
are eventually conjugate, iff $(X,\sigma^k)$ and
$(Y,\sigma^k)$ are topologically conjugate for sufficiently
large $k$.

Our \emph{graphs} are tuples $\Gamma=(V,E,\init_\Gamma,\term_\Gamma)$
consisting of a vertex set $V=V(\Gamma)$, an edge set $E=E(\Gamma)$
together with two $\init_\Gamma,\term_\Gamma: E \to V$
mapping edges to their \emph{initial} and \emph{terminal vertices},
respectively.
In this context it will be notational convenient to consider matrices
whose rows and columns are indexed by an arbitrary finite set $M$
not necessarily of the form $\{1,\dots,m\}$.
If $\Gamma=(V,E,\init_\Gamma,\term_\Gamma)$ is a graph,
its adjacency matrix is the $V \times V$ matrix $A$
with \[A_{i,j}=\abs{\{e \in E \setsep \init(e)=i, \term(e)=j\}}.\]

We call a matrix \emph{essential}, if it has no zero rows or columns
and we call a graph essential, if its adjacency matrix is essential.
A non-negative square matrix $A$ is called \emph{irreducible}
if for each pair of indices $i,j$ there is $k \in \bbN$ such that
$A^k_{i,j}>0$. If $k$ can be chosen independently of $i,j$
we call $A$ \emph{primitive}.
Every essential graph $\Gamma $ defines an \emph{edge shift} $X_\Gamma$.
This subshift of finite type over the alphabet $E(\Gamma)$ consists of all
biinfinite sequences of edges $(e_k)_{k \in \bbZ}$
with $\init(e_{k+1})=\term(e_{k})$ for all $k \in \bbZ$.

Up to renaming the edges,
the graph and hence the edge shift is uniquely determined
by the adjacency matrix.
Hence, for a graph $\Gamma$ with adjacency matrix $A$
we denote by $X_A$ the edge shift of $\Gamma$.
For $A \in \bbZP^{V \times V}$
we can canonically label the edges in the graph defined by $A$ from $i$ to $j$
by $(i,j,1),(i,j,2),\dots,(i,j,A_{i,j})  \in V \times V \times \bbN$.

An automorphism of a graph is a pair of bijections $\theta_V: V \to V$
and $\theta_E: E \to E$ such that $\theta_V \circ \init_\Gamma = \init_\Gamma
\circ \theta_E$
and $\theta_V \circ \term_\Gamma = \term_\Gamma \circ \theta_E$.
For essential graphs the map on the edges uniquely
determines the map on the vertices.
Every graph automorphism
induces an automorphism of the corresponding edge shift.
If a group $G$ is acting by graph automorphisms
on a graph $\Gamma$ we sometimes
do not write the action explicitly
and simply write $gi$ or $ge$ for
the image of the vertex $i$ or the edge $e$,
respectively, under the action of $g \in G$.

We will be interested in various objects associated to
a subshift $(X,\sigma)$. We denote
by
\begin{align*}
  \Per_k(X,\sigma):= \{x \in X \setsep \sigma^k(x)=x\}
\end{align*}
the set of \emph{$k$-periodic points of $X$}.
The cardinalities of these sets can be conveniently encoded
in the zeta function
\begin{align*}
  \zeta_X(t) := \exp(\sum_{k=1}^\infty \frac{\abs{\Per_k(X,\sigma)}}{k} t^k)
\end{align*}
Now let $A \in \bbZP^{V \times V}$ be essential.
The \emph{dimension group} $D_A$ of $X_A$
is defined as the direct limit
of the diagram \[\bbZ^V \xrightarrow{A} \bbZ^V \xrightarrow{A} \bbZ^V \xrightarrow{A}\dots\]
Formally, we define
$D_A$ as $\bbZ^V \times \bbZ / \sim$
where $\sim$ is the equivalence relation
defined by $(x,n) \sim (xA,n+1)$.
One can further endow this group
with an order and the automorphism induced
by the shift map to obtain the so called \emph{dimension triple}, but we don't need this
additional structure here.



If $A$ is an $M \times M$ matrix and $B$ is a $K \times K$ matrix,
the \emph{Kronecker product} $A \otimes B$ of $A$ and $B$ is the
$(M \times K) \times (M \times K)$ matrix with entries
$(A \otimes B)_{(i,k),(j,\ell)} = A_{i,j}B_{k,\ell}$.
If $M$ and $K$ are totally ordered and $M \times K$ is
endowed with the lexicographic ordering, then
this corresponds to the usual Kronecker product.
For a set $M$ we denote by $\ones_M$ the vector in $\bbZ^M$
whose entries are all equal to $1$
and we denote by $\ones_{M \times M} := \ones_M \ones_M^\top$
the $M \times M$ matrix which equals $1$ in every entry.
The $M \times M$ identity matrix is denoted by $I_{M \times M}$.

\section{Finite Groups Acting Freely on SFTs}
\label{sec:finite groups acting freely}

In the following let $G$ be a finite group whose identity
element we denote by $1_G$.
We start by introducing our central objects of interest.

\begin{defn}[$G$-SFT]
  \label{defn:g-sft}
  A \emph{free $G$-SFT} is a triple
  $(X,\sigma,\alpha)$
  where $(X,\sigma)$ is an SFT 
  and $\alpha: G \to \Aut(X,\sigma)$ is
  a free action of $G$ on $(X,\sigma)$.
  \enquote{Free} here means, that $\alpha(g)(x) \neq x$
  for all $x \in X$ and $g \neq 1_G$.
\end{defn}

\begin{defn}[$G$-conjugacy]
  \label{defn:g-conjugate}
  Two free $G$-SFTs $(Y_1,\sigma_1,\alpha_1)$
  and $(Y_2,\sigma_2,\alpha_2)$ are \emph{$G$-conjugate}, if
  there is a homeomorphism $\varphi:Y_1 \to Y_2$
  such that $\varphi \circ \sigma_1 = \sigma_2 \circ \varphi $
  and $\varphi \circ \alpha_1(g) =\alpha_2(g) \circ \varphi$
  for all $g \in G$.
\end{defn}

\begin{defn}[Eventual $G$-conjugacy]
  \label{defn:eventually-g-conjugate}
  Two free $G$-SFTs $(Y_{1},\sigma,\alpha_1)$
  and $(Y_{2},\sigma,\alpha_2)$ are \emph{eventually $G$-conjugate},
  if $(Y_{1},\sigma^\ell,\alpha_1)$
  and $(Y_{2},\sigma^\ell,\alpha_2)$ are
  $G$-conjugate for all sufficiently large $\ell$.
\end{defn}

The following proposition goes back to Franks, proofs can be found e.g. in
\cite[Proposition 2.9]{boyleAutomorphismGroupShift1988}
or \cite[Proof of Theorem 7.2]{saloTransitiveActionFinite2019}.

\begin{prop}[Representation of free $G$-SFTs by graph automorphisms]
  \label{thm:franks-repr}
    Let $G$ be a finite group.
  Every free $G$-SFT is $G$-conjugate to a $G$-SFT of the form
  $(X_A,\sigma,\alpha)$
  where $A$ is a square $\{0,1\}$ matrix
  defining an edge shift $(X_A,\sigma)$
  and $\alpha$ is induced by an action of $G$ by graph automorphisms
  of $\graph_A$
  acting freely on the vertex set.
\end{prop}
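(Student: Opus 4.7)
The strategy is to build the desired presentation through three successive higher block recodings of $(X,\sigma,\alpha)$, each designed to make one structural feature manifest: the edge shift form, the graph automorphism form, and freeness of the action on the vertex set. First I realize $(X,\sigma)$ as an edge shift over a $\{0,1\}$ graph. Any SFT is conjugate, via a sufficiently high block recoding, to a $1$-step SFT; any $1$-step SFT is a vertex shift with some transition matrix $A \in \{0,1\}^{V \times V}$; and such a vertex shift coincides with the edge shift of the graph $\graph_A$, whose edges are naturally labelled by the admissible pairs $(v,w)$ of consecutive vertices. Since higher block recoding is a $\sigma$-conjugacy and transports the $G$-action, after this step I may assume $(X,\sigma,\alpha) = (X_A,\sigma,\alpha)$ with $A$ a $\{0,1\}$ matrix.

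Next I would upgrade $\alpha$ to an action by graph automorphisms. Each $\alpha(g)$ is a sliding block code of some window radius $r_g$, so a further recoding to the $(2r+1)$-block presentation with $r = \max_g r_g$ makes every $\alpha(g)$ a $1$-block map on edges, i.e.\ induced by a bijection $\Phi_g$ of the new edge alphabet. Because $\Phi_g$ commutes with $\sigma$ and maps $X_{A'}$ into itself, it preserves the concatenation relation on edges. A short argument---if $e_1,e_2$ share a terminal vertex $v$, pick any edge $e$ with $\init(e)=v$; then $e_1e$ and $e_2e$ are admissible, so their images $\Phi_g(e_1)\Phi_g(e)$ and $\Phi_g(e_2)\Phi_g(e)$ are admissible, forcing $\term(\Phi_g(e_1)) = \init(\Phi_g(e)) = \term(\Phi_g(e_2))$---shows that $\Phi_g$ descends to a well-defined bijection on the vertex set, hence is a graph automorphism.

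The real work lies in ensuring that this induced action on vertices is free, since $\alpha$ acting freely on $X$ does not by itself prevent $\Phi_g$ from fixing individual short blocks. For this I would use the hypothesis topologically: the free action of the finite group $G$ on the zero-dimensional compact metric space $X$ makes $X \to X/G$ a principal $G$-bundle whose base is again zero-dimensional and compact, so by covering $X/G$ by clopens over which the bundle trivializes and refining to a clopen partition, one obtains a continuous global section. Equivalently there is a continuous $G$-equivariant labelling $\chi : X \to G$ with $\chi(\alpha(g)x) = g\chi(x)$. Since $\chi$ is continuous into a discrete set, it is a sliding block code of some window $w$; increasing the block length in the previous step to $2\max(r,w)+1$ makes each vertex of the resulting graph determine the value of $\chi$ on any configuration through it, and equivariance then forces $g\cdot v \neq v$ for every vertex $v$ and every $g \neq 1_G$.

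The step I expect to be the main obstacle is the production and integration of $\chi$: one must establish the existence of a $G$-equivariant continuous labelling from only the freeness assumption together with zero-dimensionality of $X$, and then verify that absorbing $\chi$ into a single higher block recoding preserves both the $\{0,1\}$ form of the adjacency matrix and the graph-automorphism structure secured in the previous step. Once this is in hand, the rest is routine bookkeeping with block codes, and the resulting $(X_A,\sigma,\alpha)$ is $G$-conjugate to the original system by construction.
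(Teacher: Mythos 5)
The decisive step of your argument fails. In your second paragraph you claim that passing to the $(2r+1)$-block presentation, with $r$ the maximal radius of the maps $\alpha(g)$, turns every $\alpha(g)$ into a $1$-block map. Higher block recoding does not do this: if $\beta_N$ denotes the $N$-th higher block code, then $\bigl(\beta_N\circ\alpha(g)\circ\beta_N^{-1}(y)\bigr)_0=(\alpha(g)x)_{[0,N-1]}$ depends on $x_{[-r,\,N-1+r]}$, whereas $y_0$ only records $x_{[0,N-1]}$; so the conjugated map has exactly the same memory and anticipation as before, for every $N$. (The shift map itself, which has radius $1$, is the standard illustration: no higher block presentation makes it $1$-block.) Since everything that follows --- the bijections $\Phi_g$, the induced vertex action, and the freeness argument --- presupposes these $1$-block maps, the proof as written does not go through, and the difficulty is not cosmetic: producing a presentation in which a given finite group of automorphisms acts by symbol permutations is essentially the content of the proposition.

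The ingredient that repairs it is the one you introduce only at the end, namely the equivariant labelling $\chi:X\to G$ with $\chi(\alpha(g)x)=g\chi(x)$; your bundle-section argument for its existence is correct, and this is the same device used in the proofs the paper cites (Boyle--Lind--Rudolph, and Parry's skew-product picture). But $\chi$ must be used to \emph{build} the $1$-block presentation, not merely to verify freeness afterwards. Concretely: the quotient $Y=X/G$ is an SFT; recode so that $Y$ is a vertex shift over an alphabet $\mathcal{U}$, the orbit map $\pi:X\to Y$ is $1$-block, and $\chi$ depends only on $x_0$. Then $x\mapsto\bigl(\pi(x)_i,\chi(\sigma^i x)\bigr)_{i\in\bbZ}$ is injective --- from $\pi(x)$ one recovers the $G$-orbit of $x$, and $\chi(x)$ selects the member of that orbit --- hence a conjugacy onto an SFT over the alphabet $\mathcal{U}\times G$ on which $\alpha(g)$ becomes the symbol permutation $(u,h)\mapsto(u,gh)$, manifestly free on symbols. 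Unlike general automorphisms, symbol permutations do survive a final higher block recoding to a $1$-step, $\{0,1\}$ edge-shift presentation, which then yields the desired action by graph automorphisms acting freely on vertices. Your first recoding step and your descent-to-vertices argument are fine once this is in place.
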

Every action of $G$ on an SFT $X$ induces a $G$-action
on the dimension group of this SFT. This is called
the \emph{dimension group representation} of the action of $G$ on $X$.
If $X=X_\Gamma$ is an edge shift and the action on $X_\Gamma$ is
induced by an action of $G$ on the graph
$\Gamma$, the
dimension group representation can be constructed as
follows.
Let $V$ be the vertex set and let $A$ be the adjacency matrix of
$\Gamma$.
Since $G$ acts by graph automorphisms,
we have $A_{gi,gj}=A_{i,j}$ for all $g \in G$ and $i,j \in V$.
Our group $G$ acts on $\bbZ^V$ by $w \mapsto gw$ with $(gw)_{i} = w_{g^{-1}i}$.
This action extends to an action
of $G$ on $D_A$ simply by
\begin{align*}
  [(w,n)] \mapsto [(gw,n)].
\end{align*}
This action is well-defined since $g(wA)_i = \sum_{j \in V} w_j A_{j,g^{-1}i} = \sum_{j \in V}
w_{g^{-1}j}A_{j,i} = ((gw)A)_i$.
Alternatively one can use the fact that the dimension group construction
is functorial in the sense that every isomorphism between SFTs induces a unique isomorphism of the
corresponding
dimension groups. This isomorphism can be either defined  directly
using Krieger's internal construction of the dimension group
using equivalence classes of rays as explained e.g. in \cite[Section
6]{boyleAutomorphismGroupShift1988}. Or else one uses the fact
that each isomorphism can be decomposed into a series of
state splittings and amalgamations. Between
edge shifts these can in turn be represented as
strong shift equivalence matrix pairs.
To show well-definedness of the resulting isomorphism,
i.e. independence from the decomposition into splittings and
amalgamations, one then uses Wagoner's complex.
This approach can be found in
\cite[Section 2]{wagoner1992type}
and \cite[Theorem 7.5.7]{lindIntroductionSymbolicDynamics2021}.
More concretely, if $A=RS$ and $B=SR$ is an elementary strong shift equivalence
between matrices $A$ and $B$, then
an element $[(w,n)]$ in the dimension group of $A$ is simply mapped to
$[(wR,n)]$, an element in the dimension group of $B$.
Again this is well-defined
since $wAR=wRB$.
In the case of a graph automorphism as above
we simply have $A=R_g R_{g^{-1}}A=R_{g^{-1}}A R_g$
where $R_g$ is the $V \times V$ permutation matrix
with $(R_g)_{i,j}=1$ iff $j=gi$,
so our action is given by $[(w,n)] \mapsto [(wR_g,n)]$
which agrees with the first definition
since $(wR_g)_{i}=w_{g^{-1}i}=(gw)_i$.

\begin{defn}[Inert free $G$-SFTs]
We say
that the free $G$-SFT $(X,\sigma,\alpha)$ is \emph{inert} if the action
on the dimension group induced by $\alpha$ is trivial,
i.e. $\alpha(g) \in \Aut(X,\sigma)$ is inert for all $g \in G$.
\end{defn}




Applying the explicit construction of the dimension group
representation for edge shifts we obtain the following
characterization of inertness,
which appeared as a remark 
in \cite[p. 497]{fiebigPeriodicPointsFinite1993b}. 

\begin{prop}[Inertness of free graph automorphisms]
  \label{prop:intertness-free-graph-autos}
  Let $A \in \bbZP^{V \times V}$ be essential and let
  $(X_A,\sigma,\alpha)$ be a free $G$-SFT induced by
  an action of $G$ on the graph defined by $A$. Then
  $(X_A,\sigma,\alpha)$ is inert, iff
  there is $\ell \in \bbN$ such that
  for all $g \in G,i,j \in V(G)$ we have
  \begin{align*}
    (A^\ell)_{i,j}=(A^{\ell})_{i,gj}
  \end{align*}
\end{prop}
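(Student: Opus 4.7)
The plan is to translate the inertness condition, which a priori involves each element of the direct limit $D_A$ separately, into a uniform matrix identity involving the permutation matrices $R_g$ and a power of $A$, and then rewrite that identity in the coordinate form given in the statement.

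First I would reuse the notation from the discussion preceding the proposition: let $R_g$ be the $V \times V$ permutation matrix with $(R_g)_{i,j}=1$ iff $j=gi$, so that the action of $g \in G$ on $D_A$ is $[(w,n)] \mapsto [(wR_g,n)]$. From the direct limit description of $D_A$, two classes $[(x,n)]$ and $[(y,m)]$ coincide iff $xA^{k-n}=yA^{k-m}$ for some sufficiently large $k$. Specialising to our action and choosing the representative at level $0$ (note that $[(w,n)] = [(wA^{-n},0)]$ whenever $n<0$, so every class has such a representative), triviality on a single class $[(w,0)]$ is equivalent to the existence of some $\ell_w \in \bbZP$ with $wR_g A^{\ell_w} = w A^{\ell_w}$.

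Next I would promote this to a uniform statement. Taking $w$ to range over the finite basis $\{e_i \colon i \in V\}$ and $g$ over the finite group $G$, and using the observation that $wR_g A^\ell = wA^\ell$ remains true when $\ell$ is replaced by any $\ell' \geq \ell$ (multiply on the right by $A^{\ell'-\ell}$), I can take the maximum of these finitely many exponents. This yields a single $\ell \in \bbN$ such that $e_i R_g A^\ell = e_i A^\ell$ for every $i \in V$ and every $g \in G$, which by linearity is equivalent to
\begin{equation*}
  R_g A^\ell = A^\ell \qquad \text{for every } g \in G.
\end{equation*}
Conversely, this matrix identity immediately implies that every class in $D_A$ is fixed by every $g$, so inertness is equivalent to the existence of such an $\ell$.

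Finally I would unpack the matrix identity coordinate-wise. From the definition of $R_g$, one has $(R_g A^\ell)_{i,j} = (A^\ell)_{gi,j}$, so the identity reads $(A^\ell)_{gi,j}=(A^\ell)_{i,j}$ for all $i,j,g$. The graph automorphism relation $A_{gi,gj}=A_{i,j}$ iterates to $(A^\ell)_{gi,gj}=(A^\ell)_{i,j}$, and substituting $j \mapsto gj$ turns $(A^\ell)_{gi,j}=(A^\ell)_{i,j}$ into the form $(A^\ell)_{i,j}=(A^\ell)_{i,gj}$ demanded by the proposition; the same substitution run in reverse handles the other direction. The only real obstacle is bookkeeping: keeping left versus right actions and row versus column indices straight, and passing from the \enquote{per element} finiteness built into the direct limit to a uniform exponent $\ell$. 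The latter is precisely the max-over-a-finite-set argument in the middle paragraph; everything else is routine manipulation with the graph automorphism relation.
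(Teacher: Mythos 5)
Your proposal is correct and takes essentially the same route as the paper: both unwind the explicit dimension-group action on representatives, reduce to standard basis vectors (with the uniform $\ell$ obtained by a max over the finite index set and finite group, which you spell out more carefully than the paper does), and invoke the graph-automorphism relation $A_{gi,gj}=A_{i,j}$ — you at the end to convert the row condition $(A^\ell)_{gi,j}=(A^\ell)_{i,j}$ into the column condition, the paper earlier to commute $g$ past $A^\ell$. One small slip in your second paragraph: a class $[(w,n)]$ with $n>0$ need not have a level-$0$ representative (e.g.\ $A=(2)$ gives $D_A\cong\bbZ[1/2]$, and $[(1,1)]$ is not of the form $[(w',0)]$), but this is harmless because the condition for $g$ to fix $[(w,n)]$, namely $(gw)A^\ell=wA^\ell$ for some $\ell$, does not depend on $n$, so checking the classes $[(w,0)]$ for all $w\in\bbZ^V$ still suffices.
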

\begin{proof}
  Let $A$ be an $V \times V$ matrix.
  Let $[(w,n)]$ be an element in the dimension group of $(X_A,\sigma)$.
  By the explicit construction of the dimension group representation,
  a group element $g$ fixes $[(w,n)]$ if
  only if
  there is $\ell \in \bbN$ such that
  $w A^\ell = (gw) A^\ell =g(w A^\ell)$.
  But this is equivalent to
  \begin{align*}
    (w A^\ell )_{gj} &= (g(wA^\ell))_{gj}=(wA^\ell)_{j}
  \end{align*}
  for all $j \in V$.
  This holds for all $w \in \bbZ^V$ if it holds for all standard basis vectors,
  which
  in turn is equivalent to
  \begin{align*}
    (A^\ell)_{i,j}=(A^\ell)_{i,g j}.
  \end{align*}
  for every $i,j \in V$.
\end{proof}

\section{Matrices Over The Integral Group Ring and $G$-Extensions of SFTs}
\label{sec:matrices-over-integer-group-rings}

In this section we look at $G$-SFTs from the opposite direction.
Instead of considering an SFT together with
a free $G$-action and obtaining
the orbit space as a quotient, we start with an
orbit space, which is itself an SFT,  and construct an SFT as a $G$-extension.
These extensions can be defined by
square matrix
over the integer group ring.
Our goal is then to show, that every free $G$-SFT
can be represented in that way.
We will use a very concrete definition of $G$-extensions.
For their connection to cocycles and the general
theory of extensions of dynamical systems, see \cite{boyleFiniteGroupExtensions2017}.

By $\bbZ[G]$ we denote the integer group ring
over $G$ consisting of all formal linear combinations
of elements in $G$ with integer coefficients.
The set $\bbZP[G]$ consists of all elements
in $\bbZ[G]$ whose coefficients are non-negative.

For $h \in G$ let $\pi_h$ be the map from $\bbZ[G] \to \bbZ$
defined by $\pi_h(\sum_{g \in G} a_g g)=a_h$.
When we apply this map entry wise, we also get a map
$\pi_h: \bbZ[G]^{V \times V} \to \bbZ^{V \times V}$
and we can write every matrix $B \in \bbZ[G]^{V \times V}$
as $B = \sum_{h \in G} \pi_h(B) h$.

Let $B \in \bbZP[G]^{V \times V}$.
We define two matrices $\calA(B) \in \bbZP^{V \times V}$
and $\calE(B) \in \bbZP^{(V \times G)\times (V \times G)}$
as follows:
\begin{align*}
  \calA(B) &:=\sum_{g \in G} \pi_g(B), \\
  \calE(B)_{(i,g),(j,h)} &:=\pi_{g^{-1}h}(B)_{i,j}.
\end{align*}
We call $\calA(B)$ the \emph{augmentation} of $B$
and $\calE(B)$ the \emph{extension} defined by $B$.

Using the Kronecker product, we can write this more concisely.
Let $P_g$ be the permutation matrix defined by
\begin{align*}
  (P_g)_{h,k} = \begin{cases}
    1 \text{ if } k=hg \\
    0 \text{ otherwise}
  \end{cases}.
\end{align*}
Then
\begin{align*}
  \calE(B) = \sum_{g \in G} \pi_g(B) \otimes P_g.
\end{align*}

We get an action $\theta_B$ of $G$ on the graph
defined by $\calE(B)$ as follows. The automorphism
$\theta_B(g)$ acts
on the vertex set $V \times G$ of this graph
simply by $\theta_B(g)(i,h)=(i,gh)$.
We can extend this map to the edges, and thus define a graph automorphism, since
\begin{align*}
  \calE(B)_{(i,rg),(j,rh)}
  =\pi_{g^{-1}h}(B)_{i,j}=\calE(B)_{(i,g),(j,h)}
  \text{ for every } r \in G.
\end{align*}
Therefore we can define that $\theta_B(r)$ maps the $k$-th edge between $(i,g)$ and $(j,h)$
to the $k$-th edge between $(i,rg)$ and $(j,rh)$.
This action by graph automorphisms immediately extends
to an action $\beta_B$ of $G$ on the edge shift $X_{\calE(B)}$.

The graphs defined by $B$, $\calA(B)$ and $\calE(B)$ are related as
follows: $B$ defines a graph $\Gamma$ with edge labels in $G$.
The graph defined by $\calA(B)$ simply forgets about the edge labels
and $\calE(B)$ defines a covering graph or more specifically a $|G|$-lift of $\Gamma$.

\begin{exam}[Augmentation and Extension]
  \label{exam:extension-and-augmentation}
  Consider $\bbZ/2\bbZ=\{e,g\}$, where $e$ is the identity element,
  and 
  \begin{align*}
    B=\begin{pmatrix}g & e \\ g &0\end{pmatrix}                      
  \end{align*}
  The labeled graph corresponding to $B$ is
  depicted on the left in \Cref{fig:graphBG}.
  We have
  \begin{align*}
    \calA(B)&=\begin{pmatrix}
                1 & 1 \\ 1 &0
              \end{pmatrix}\\
    \calE(B)&=\begin{pmatrix}
               0 & 1 \\ 0 &0
             \end{pmatrix}
              \otimes
             \begin{pmatrix}
               1 & 0 \\ 0 &1
             \end{pmatrix}
             +                             
              \begin{pmatrix}
               1 & 0 \\ 1 &0
             \end{pmatrix}
              \otimes
             \begin{pmatrix}
               0 & 1 \\ 1 &0
             \end{pmatrix} \\
            &=\blockmat{\begin{matrix}
               0 & 1 \\ 1 &0
             \end{matrix} &\begin{matrix}
               1 & 0 \\ 0 &1
             \end{matrix} \\
    \hline
             \begin{matrix}
               0 & 1 \\ 1 &0
             \end{matrix}
             &\begin{matrix}
               0 & 0 \\ 0 &0
             \end{matrix}}
  \end{align*}
  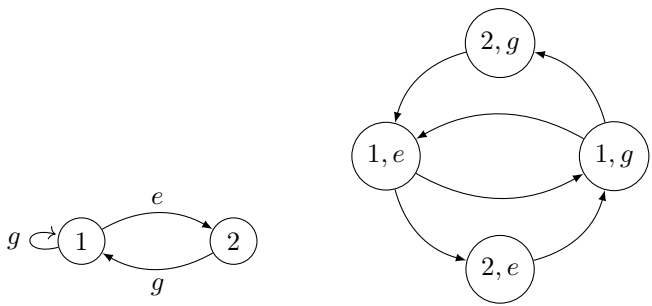
\begin{figure}[h]
    \begin{center}
    \begin{tikzpicture}
      \node[circle,draw] (1) at (-2,0) {1};
      \node[circle,draw] (2) at (0,0) {2};
      \draw[-latex] (1) edge[bend left, "$e$"] (2);
      \draw[-latex] (2) edge[bend left, "$g$"] (1);
      \draw[-latex] (1) edge[loop left, "$g$"] (1);
    \end{tikzpicture}
    \hspace{1cm}
    \begin{tikzpicture}
      \node[circle,draw] (01) at (-1.5,0) {$1,e$};
      \node[circle,draw] (10) at (1.5,0) {$1,g$};
      \node[circle,draw] (00) at (0,1.5) {$2,g$};
      \node[circle,draw] (11) at (0,-1.5) {$2,e$};
      \draw[-latex] (00) edge[bend right] (01);
      \draw[-latex] (01) edge[bend right] (10);
      \draw[-latex] (10) edge[bend right] (01);
      \draw[-latex] (01) edge[bend right] (11);
      \draw[-latex] (10) edge[bend right] (00);
      \draw[-latex] (11) edge[bend right] (10);
    \end{tikzpicture}
  \end{center}
    \caption{A $\bbZ/2\bbZ$ extension of the golden mean shift.}
    \label{fig:graphBG}
\end{figure}
\end{exam}

There is a canonical factor map from $X_{\calE(B)}$ to $X_{\calA(B)}$
induced by the graph homomorphism from $\Gamma_{\calE(B)}$ to
$\Gamma_{\calA(B)}$ that
maps the $\ell$-th edge from $(i,g)$ to $(j,h)$ 
onto the $\ell$-th edge from $i$ to $j$ labeled by $g^{-1} h$.
We now want to describe the preimages
of $(e_k)_{k \in \bbZ} \in X_{\calA(B)}$ under
this factor map.
To do so, we have to introduce slightly more notation.
We may relabel the edges from $i$ to $j$ in the graph
defined by $\calA(B)$ by
$(i,j,t,m)$ with $t \in G$ and $m \in \{1,\dots,\pi_t(B_{i,j})\}$.
The preimages of $(i_k,j_k,t_k,m_k)_{k \in \bbZ}$
under the factor map described above are of the form
\[((i_k,g_k),(j_k,h_k),m_k)_{k \in \bbZ}\] 
where $g_0$ is any element in $G$ and where
$g_{k+1}=h_k=g_k t_k$.
These preimages are precisely the $G$-orbits in $X_{\calE(B)}$ of
the action $\beta_B$.
Therefore $(X_{\calA(B)},\sigma)$ is conjugate to
the orbit space $(X_{\calE(B)}/\beta_B,\sigma)$.

Parry showed that every free $G$-SFT is isomorphic
to a $G$-extension defined by a matrix over $\bbZP[G]$.
This construction is sketched in \cite[Section
2]{boyleEquivariantFlowEquivalence2005}.
For completeness we give a short explicit proof using the notation introduced above
in the next proposition.

\begin{prop}[{Representing free $G$-SFTs by matrices over $\bbZP[G]$}]
  \label{prop:repr-by-group-ring-matrix}
  For every free $G$-SFT $(X,\sigma,\alpha)$ we
  can find $B \in \bbZP[G]$ such that
  \[(X,\sigma,\alpha) \cong (X_{\calE(B)},\sigma,\beta_B).\]
\end{prop}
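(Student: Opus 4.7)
The plan is to combine Franks' graph-automorphism representation (Proposition~\ref{thm:franks-repr}) with the bijection between a vertex orbit and $G$, in order to read off a matrix $B$ over $\bbZP[G]$ whose extension recovers the original $G$-SFT. By \Cref{thm:franks-repr}, we may assume from the outset that $(X,\sigma,\alpha)=(X_A,\sigma,\alpha_A)$ where $A\in\{0,1\}^{W\times W}$ is the adjacency matrix of a graph $\Gamma_A$ on which $G$ acts by graph automorphisms, freely on $W$. I would then pick a fundamental domain $V\subseteq W$ for this action, so that the map $G\times V\to W$, $(g,v)\mapsto gv$, is a bijection.

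Next I would define the candidate matrix
\begin{align*}
B_{i,j} \;:=\; \sum_{t\in G} A_{i,\,tj}\, t \;\in\; \bbZP[G], \qquad i,j\in V,
\end{align*}
so that $\pi_t(B)_{i,j}=A_{i,tj}$ for every $t\in G$. Because $G$ acts by graph automorphisms, $A_{gi,hj}=A_{i,g^{-1}hj}$, and the entries of $A$ only depend on $t=g^{-1}h$ after such a reduction; this is exactly what makes $B$ well-defined and, since $A$ is $\{0,1\}$-valued, forces the coefficients of $B$ to lie in $\{0,1\}$, hence in $\bbZP$.

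The key verification is then that the bijection
\begin{align*}
\phi \colon V\times G \longrightarrow W, \qquad \phi(i,g) = gi,
\end{align*}
extends to a graph isomorphism $\Gamma_{\calE(B)}\to \Gamma_A$ intertwining the actions. On adjacency matrices this means checking
\begin{align*}
\calE(B)_{(i,g),(j,h)} \;=\; \pi_{g^{-1}h}(B)_{i,j} \;=\; A_{i,\,g^{-1}hj} \;=\; A_{gi,\,hj} \;=\; A_{\phi(i,g),\phi(j,h)},
\end{align*}
where the middle equality is the automorphism property of the $G$-action on $\Gamma_A$. Since both matrices are $\{0,1\}$-valued, this equality of adjacency matrices lifts uniquely to a bijection of edge sets, giving a graph isomorphism $\Gamma_{\calE(B)}\cong \Gamma_A$. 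Under $\phi$, the canonical action $\theta_B(r)(i,g)=(i,rg)$ transports to $gi\mapsto (rg)i$, i.e.\ to the original $G$-action on $W$; so the induced shift conjugacy $X_{\calE(B)}\to X_A$ intertwines $\beta_B$ with $\alpha$, giving the desired $G$-conjugacy.

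I expect no real difficulty beyond bookkeeping; the main subtlety is keeping the left/right conventions for $G$ straight so that the permutation matrices $P_g$ appearing in the definition of $\calE(B)$ match the chosen convention $\phi(i,g)=gi$. The freeness assumption enters only to guarantee that a fundamental domain $V$ exists and that $G\times V\to W$ is a bijection; without freeness the same formula for $B$ would still make sense but $\phi$ would no longer be injective.
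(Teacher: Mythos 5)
Your proposal is correct and follows essentially the same route as the paper: reduce to Franks' graph-automorphism representation, choose one vertex per $G$-orbit, and read off $B$ by $\pi_t(B)_{i,j}=A_{i,tj}$, so that the relabeling $(i,g)\mapsto gi$ identifies $\calE(B)$ with $A$ and $\theta_B$ with the given action. The only cosmetic difference is that the paper phrases the conclusion as a literal equality of matrices after relabeling rather than as an explicit graph isomorphism $\phi$.
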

\begin{proof}
  By \Cref{thm:franks-repr} we may assume that $X=X_A$ is an edge
  shift
  defined by a $\{0,1\}$-matrix $A$ and $\alpha$ is induced
  by an action $\eta$ of $G$ by graph automorphisms acting freely on the
  vertex set $V$ of the graph $\Gamma$ defined by $A$.

  Under these assumptions we will relabel the vertices and edges of $\Gamma$ such that
  $(X_{\calE(B)},\sigma,\beta_B)$ is  equal
  to $(X_A,\sigma,\alpha)$ as follows.
  Select vertices $v_{1,e},\dots,v_{m,e}$, one from each $G$-orbit.
  Since $\eta$ acts freely on the vertices and
  every vertex is contained in precisely one $G$-orbit, for each $v \in V$ there is now a unique pair
  $(i,g)$ s.t. $v=\eta(g)(v_{i,e})$.
  We relabel the vertex $v$ by $v_{i,g}$.
  For $g \in G$ define $B_g \in \{0,1\}^{m \times m}$ by $(B_g)_{i,j}=1$ iff
  there is an edge from $v_{i,h}$ to $v_{i,hg}$.
  Set $B=\sum_{g \in G} B_g g$.

  Now if we label the vertices in $V$ with $(i,g)$ instead of
  $v_{i,g}$ and thus turn $A$ into a $(m \times G) \times (m \times
  G)$ matrix, we really get an equality
  $\calE(B) = A$ and $\alpha = \beta_B$.
\end{proof}

The operations of forming $\calA(B)$ and $\calE(B)$ behave nicely with
respect to matrix powers as the next lemma shows.
\begin{lem}[Compatibility of matrix multiplication with augmentation
  and extension]
  \label{lem:compatibility}
  For $B \in \bbZP[G]^{V \times V}$ and $k \in \bbN$ we have
  \begin{align*}
    \calA(B^k)&=\calA(B)^k\\
    \calE(B^k)&=\calE(B)^k.
  \end{align*}
\end{lem}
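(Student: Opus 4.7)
My plan is to recognize that both $\calA$ and $\calE$ are essentially induced by ring homomorphisms out of $\bbZ[G]$, so that the lemma reduces to the fact that applying a ring homomorphism entrywise to matrices commutes with matrix multiplication and hence with taking powers.

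For $\calA$, I would first note that the augmentation map $\varepsilon: \bbZ[G] \to \bbZ$, $\sum_g a_g g \mapsto \sum_g a_g$, is a ring homomorphism; this is immediate from the fact that $\varepsilon(gh) = 1 = \varepsilon(g)\varepsilon(h)$ on the basis. Applied entrywise, $\varepsilon$ induces a ring homomorphism on matrix rings $\bbZ[G]^{V \times V} \to \bbZ^{V \times V}$, and by inspection this entrywise map is precisely $B \mapsto \sum_g \pi_g(B) = \calA(B)$. Therefore $\calA(B^k) = \calA(B)^k$.

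For $\calE$, the plan is analogous: the right regular representation $\rho: \bbZ[G] \to \bbZ^{G \times G}$, defined on the basis by $g \mapsto P_g$, is a ring homomorphism, which comes down to verifying $P_g P_h = P_{gh}$ from $(P_g)_{x,z} = [z = xg]$. Applied entrywise, $\rho$ gives a ring homomorphism on $V \times V$ matrices, landing in $V \times V$ block matrices of size $G \times G$, i.e. in $\bbZ^{(V \times G) \times (V \times G)}$. I would then check that this entrywise map coincides with $\calE$ by computing the $((i,g),(j,h))$-entry:
\begin{align*}
\sum_k \pi_k(B_{i,j}) (P_k)_{g,h} = \pi_{g^{-1}h}(B_{i,j}) = \calE(B)_{(i,g),(j,h)}.
\end{align*}
Since this entrywise map preserves products, it preserves powers, giving $\calE(B^k) = \calE(B)^k$.

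If a more self-contained verification is preferred, the same identity can be obtained by direct expansion using $(A \otimes B)(C \otimes D) = (AC) \otimes (BD)$: one computes
\begin{align*}
\calE(B)\calE(C) = \sum_{g,h \in G} \pi_g(B)\pi_h(C) \otimes P_{gh},
\end{align*}
and compares with $\calE(BC) = \sum_k \pi_k(BC) \otimes P_k$ after observing $\pi_k(BC) = \sum_{gh=k} \pi_g(B)\pi_h(C)$, which in turn follows from expanding the product in $\bbZ[G]$ entrywise. There is no real obstacle here; the only thing to be careful about is the convention for $P_g$ so that $P_g P_h = P_{gh}$ holds in the order dictated by the formula for $\calE$, which the paper's setup already arranges correctly.
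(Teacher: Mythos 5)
Your proof is correct, but it takes a genuinely different route from the paper. The paper proves both identities by induction on $k$, expanding the entries of $\calA(B^{k+1})$ and $\calE(B^{k+1})$ via the convolution formula $\pi_g(xy)=\sum_{h}\pi_h(x)\pi_{h^{-1}g}(y)$ and regrouping the sums by hand. You instead observe that $\calA$ is the entrywise application of the augmentation homomorphism $\bbZ[G]\to\bbZ$ and that $\calE$ is the entrywise application of the (right) regular representation $g\mapsto P_g$, followed by the standard identification of $V\times V$ matrices over $\bbZ^{G\times G}$ with $(V\times G)\times(V\times G)$ matrices over $\bbZ$; since a ring homomorphism applied entrywise commutes with matrix multiplication (this holds even though $\bbZ^{G\times G}$ is noncommutative), both identities follow at once, with no induction needed. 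Your verifications of the two key facts ($P_gP_h=P_{gh}$ under the paper's convention $(P_g)_{h,k}=[k=hg]$, and $\sum_k\pi_k(B_{i,j})(P_k)_{g,h}=\pi_{g^{-1}h}(B_{i,j})=\calE(B)_{(i,g),(j,h)}$) are exactly the points that need checking, and they are right. Your approach is more conceptual and explains \emph{why} the lemma holds — $\calA$ and $\calE$ are functorial in the ring — while the paper's computation is more elementary and self-contained; your fallback direct expansion via $(A\otimes B)(C\otimes D)=(AC)\otimes(BD)$ is essentially the paper's calculation for a single product, which indeed suffices without induction.
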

\begin{proof}
  This can be shown by induction since
  \begin{align*}
    \calA(B^{k+1})_{i,j} &= \sum_{g \in G} \sum_{\ell \in V} \pi_g( 
    (B^k)_{i,\ell}B_{\ell,j})\\
    &=  \sum_{\ell\in V} \sum_{g \in G} \sum_{h \in G} \pi_h( 
      (B^k)_{i,\ell}) \pi_{h^{-1}g}(B_{\ell,j}) \\
    &=  \sum_{\ell \in V} \sum_{h \in G} \pi_h( 
      (B^k)_{i,\ell}) \sum_{g \in G}  \pi_{g}(B_{\ell,j})\\
    &=  \sum_{\ell \in V} \calA(B)^k_{i,\ell} \calA(B)_{\ell,j}=
      \calA(B)^{k+1}_{i,j}.
  \end{align*}
  Similarly
  \begin{align*}
    \calE(B^{k+1})_{(i,s),(j,t)} &= \pi_{s^{-1}t} (B^{k+1}_{i,j}) \\
                                 &=\sum_{\ell \in V} \pi_{s^{-1}t} (B^{k}_{i,\ell} B_{\ell,j})\\
                                 &=\sum_{\ell \in V} \sum_{g \in G} 
                                   \pi_{s^{-1}g}(B_{i,\ell}^k)
                                   \pi_{g^{-1}t}(B_{\ell,j})\\
    &=\sum_{\ell \in V} \sum_{g \in G}\calE(B)^k_{(i,s),(\ell,g)}
      \calE(B)_{(\ell,g),(j,t)} = \calE(B)^{k+1}_{(i,s),(j,t)}.\qedhere
  \end{align*}
\end{proof}

The usual definition of shift equivalence
due to Williams
for integer matrices carries over to the case of matrices
over rings or subsets of rings, see
\cite{boyleFiniteGroupExtensions2017}.

\begin{defn}[Shift equivalence and strong shift equivalence]
    \label{defn:gr-shift-equivalence}
    Let $\calR$ be a subset of a ring.
  We say that two matrices $A \in \calR^{V \times V}$
  and $B \in \calR^{W \times W}$ are \emph{shift equivalent (SE)
  over $\calR$ with lag $\ell \in \bbN$} if there are
  matrices $R \in \calR^{V \times W}$ and $S \in \calR^{W \times V}$ 
  and $\ell \in \bbN$ such that
  $A^\ell = RS$, $B^\ell = SR$
  and $AR=RB, SA=BS$.
  We say that $A$ and $B$ are \emph{elementary strong
  shift equivalent} over $\calR$ if
  they are shift equivalent with lag $1$.
  We say that $A$ and $B$ are \emph{strong shift equivalent (SSE) over $\calR$}
  if there is a chain of matrices $A=C_1,\dots,C_n=B$
  such that $C_k$ is elementary strong shift equivalent
  to $C_{k+1}$ for each $k \in \{1,\dots,n-1\}$.
\end{defn}

By a famous theorem due to Williams, Kim and Roush, strong shift equivalence
of non-negative integer matrices is equivalent to
conjugacy of the corresponding edge shifts
and shift equivalence of the matrices is equivalent to
eventual conjugacy.

The same theorem holds for matrices over $\bbZP[G]$
and free $G$-SFTs. This was shown by Parry, Sullivan, Boyle and Schmieding,
see \cite[Proposition B.11]{boyleFiniteGroupExtensions2017}
and \cite[Proposition 2.7.1.]{boyleEquivariantFlowEquivalence2005}.
\begin{thm}[Strong shift equivalence and conjugacy]
  \label{thm:gr-se-eventual-conjugacy}
  Let $B \in \bbZP[G]^{V \times V}$
  and $C \in \bbZP[G]^{W \times W}$. Then the following are equivalent
  \begin{enumerate}
  \item $B$ and $C$ are SSE over $\bbZP[G]$.
  \item $(X_{\calE(B)},\sigma,\beta_B)$ and
    $(X_{\calE(C)},\sigma,\beta_C)$
    are $G$-conjugate.
  \end{enumerate}
\end{thm}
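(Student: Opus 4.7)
The plan is to reduce both directions to the classical Williams--Kim--Roush theorem for SSE of non-negative integer matrices, working at the level of the extensions $\calE(B)$ and $\calE(C)$ and tracking the $G$-action throughout. The first ingredient is to extend \Cref{lem:compatibility} to non-square matrices: for $R \in \bbZP[G]^{V \times W}$ and $S \in \bbZP[G]^{W \times V}$, defining $\calE(R) \in \bbZP^{(V \times G) \times (W \times G)}$ by $\calE(R)_{(i,g),(j,h)} := \pi_{g^{-1}h}(R_{i,j})$ (and similarly for $S$), exactly the same indexing computation as in \Cref{lem:compatibility} yields $\calE(RS) = \calE(R)\,\calE(S)$ whenever the product is defined. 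Moreover $\calE(R)_{(i,rg),(j,rh)} = \calE(R)_{(i,g),(j,h)}$ for all $r \in G$, so $\calE(R)$ is a $G$-equivariant morphism of bipartite graphs for the natural left actions on the $G$-coordinate.

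For the forward direction, by transitivity it suffices to handle a single elementary SSE $B = RS$, $C = SR$ over $\bbZP[G]$. The above identity gives $\calE(B) = \calE(R)\calE(S)$ and $\calE(C) = \calE(S)\calE(R)$, so the classical Williams--Kim--Roush theorem produces a conjugacy $\varphi : X_{\calE(B)} \to X_{\calE(C)}$. Concretely, $\varphi$ factors each edge of $X_{\calE(B)}$ as a composable pair through the intermediate vertex set $W \times G$ (first via $\calE(R)$, then via $\calE(S)$) and reassembles the pair in the opposite order as an edge of $X_{\calE(C)}$. Because both $\calE(R)$ and $\calE(S)$ intertwine the $G$-actions on their vertex sets, this recipe commutes with the actions $\beta_B$ and $\beta_C$, so $\varphi$ is a $G$-conjugacy. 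Concatenating the elementary steps establishes $(1) \Rightarrow (2)$.

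For the reverse direction, suppose $\varphi$ is a $G$-conjugacy between the two free $G$-SFTs. Williams's decomposition theorem presents $\varphi$ as a finite chain of elementary state splittings and amalgamations on the extension graphs $\Gamma_{\calE(B)}$ and $\Gamma_{\calE(C)}$. The essential claim is that each step in such a chain can be chosen $G$-equivariantly. A state splitting at a vertex $(i,g)$ of $\Gamma_{\calE(B)}$ is specified by a partition of its out-edges, and $G$-equivariance of $\varphi$ forces the partitions along a vertex orbit $\{(i,rg) : r \in G\}$ to agree up to $G$-translation; averaging (or more precisely, replacing the partition at $(i,rg)$ by the $r$-translate of the partition at $(i,g)$) produces a $G$-equivariant splitting that realizes the same conjugacy. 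A $G$-equivariant splitting of $\Gamma_{\calE(B)}$ is exactly the $\calE$-image of a splitting of the group-ring graph $\Gamma_B$, and so corresponds to an elementary SSE over $\bbZP[G]$. Concatenating these $\bbZP[G]$-level steps yields an SSE chain from $B$ to $C$.

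The main obstacle is the $G$-equivariant refinement of the Williams decomposition in the reverse direction. The averaging trick works because the $G$-action on the vertex set is free, so each orbit contributes an unobstructed block to the splitting matrix; for non-free actions one would additionally have to match partitions at $G$-fixed vertices, which is genuinely restrictive. The formal setup is most cleanly handled via Wagoner's equivariant complex as in \cite[Proposition B.11]{boyleFiniteGroupExtensions2017} and \cite[Proposition 2.7.1]{boyleEquivariantFlowEquivalence2005}, which I would cite rather than reproduce in full.
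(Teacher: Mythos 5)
The paper does not prove this theorem at all: it simply attributes it to Parry, Sullivan, Boyle and Schmieding and cites \cite[Proposition B.11]{boyleFiniteGroupExtensions2017} and \cite[Proposition 2.7.1]{boyleEquivariantFlowEquivalence2005}. Your sketch is a correct outline of the standard argument behind those citations --- the forward direction via $\calE(RS)=\calE(R)\calE(S)$ together with an equivariant choice of edge bijections is fine, and in the reverse direction you rightly identify the equivariant Williams decomposition as the crux and defer it to exactly the references the paper itself relies on --- so your treatment is, if anything, more detailed than the paper's.
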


\begin{thm}[Shift equivalence and eventual conjugacy]
  \label{thm:gr-se-eventual-conjugacy}
  Let $B \in \bbZP[G]^{V \times V}$
  and $C \in \bbZP[G]^{W \times W}$. Then the following are equivalent
  \begin{enumerate}
  \item $B$ and $C$ are SE over $\bbZP[G]$.
  \item $(X_{\calE(B)},\sigma,\beta_B)$ and
    $(X_{\calE(C)},\sigma,\beta_C)$
    are eventually $G$-conjugate.
  \end{enumerate}
\end{thm}

\section{A Characterization of Free Inert $G$-SFTs}
\label{sec:inert-g-sfts}
In this section we collect the results about
inert free $G$-SFTs which we need in the proof of our main result.
An important role is played by the element $u_G : = \sum_{g \in G} g \in \bbZ_+[G]$.

\begin{defn}[{Inert matrices over $\bbZP[G]$}]
We call a matrix $B \in \bbZP[G]^{V \times V}$ \emph{inert},
if the $G$-SFT $(X_{\calE(B)},\sigma,\beta_B)$ is inert.
\end{defn}

\begin{thm}[Characterization of inert $G$-SFTs]
  \label{thm:charact-inert}
  Let $B \in \bbZP[G]^{V \times V}$.
  Then the following are equivalent:
  \begin{enumerate}[(i)]
  \item $B$ is inert. \label{enum:inert-1}
  \item For every $g,s,t \in G$, $i,j \in V$
    and every sufficiently large $\ell \in \bbN$
    we have\label{enum:inert-2}
    \begin{align}
      \calE(B^\ell)_{(i,s),(j,t)} &=
      \calE(B^\ell)_{(i,s),(j,gt)}.
      \label{eq:inert-charac-2}
    \end{align}
    \item For all sufficiently large $\ell \in \bbN$ we have
      $B^{\ell} \in u_G \bbZP^{V \times V}$.\label{enum:inert-3}
    \item $\calE(B)$ and $\calA(B)$ are shift equivalent.
    \item $\zeta_{\calE(B)}= \zeta_{\calA(B)}$.
  \end{enumerate}
\end{thm}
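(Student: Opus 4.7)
My strategy is to establish the cycle (i) $\Leftrightarrow$ (ii) $\Leftrightarrow$ (iii) $\Rightarrow$ (iv) $\Rightarrow$ (v) $\Rightarrow$ (iii), using the algebraic statement (iii) as the hub. The equivalence (i) $\Leftrightarrow$ (ii) is essentially a direct specialization of \Cref{prop:intertness-free-graph-autos} to the graph defined by $\calE(B)$ with the $G$-action $\theta_B$: since $\theta_B(g)$ sends $(j,t)$ to $(j,gt)$, the graph-theoretic criterion from that proposition becomes exactly \eqref{eq:inert-charac-2} after using \Cref{lem:compatibility} to identify $\calE(B)^\ell$ with $\calE(B^\ell)$. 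The only subtlety is upgrading ``some $\ell$'' to ``every sufficiently large $\ell$'', and this follows from the identity $u_G \cdot x = \bigl(\sum_{g \in G}\pi_g(x)\bigr)\,u_G$ for $x \in \bbZ[G]$. For (ii) $\Leftrightarrow$ (iii), I would simply unpack $\calE(B^\ell)_{(i,s),(j,t)} = \pi_{s^{-1}t}(B^\ell_{i,j})$: independence of $t$ for fixed $(i,s,j)$ means that $\pi_r(B^\ell_{i,j})$ is constant in $r$, which is exactly saying that $B^\ell_{i,j}$ is a nonnegative multiple of $u_G$.

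For (iii) $\Rightarrow$ (iv), starting from $B^\ell = u_G C$ with $C \in \bbZP^{V \times V}$, I would produce an explicit lag-$\ell$ shift equivalence $(R,S)$ over $\bbZP$ between $\calE(B)$ and $\calA(B)$ by setting $R := C \otimes \ones_G$ and $S := I_{V \times V} \otimes \ones_G^\top$. Using $\sum_{g \in G} P_g = \ones_{G \times G}$, one computes $RS = C \otimes \ones_{G \times G} = \calE(B^\ell)$ and $SR = |G|\,C = \calA(B^\ell)$, while the two intertwining identities $\calE(B)\,R = R\,\calA(B)$ and $S\,\calE(B) = \calA(B)\,S$ follow from $\calA$ being a ring homomorphism together with the commutation $\calA(B)\,C = C\,\calA(B)$ (itself obtained by applying $\calA$ to $B \cdot B^\ell = B^\ell \cdot B$). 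The implication (iv) $\Rightarrow$ (v) is then standard: a shift equivalence of nonnegative integer matrices preserves the nonzero spectrum, hence the zeta function.

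The main obstacle is the closing implication (v) $\Rightarrow$ (iii), which I would attack via the representation theory of $G$ after extending scalars to $\bbC$. Using the Wedderburn decomposition $\bbC[G] \cong \bigoplus_\rho M_{d_\rho}(\bbC)$, indexed by the irreducible complex representations $\rho$ of $G$, each $\rho$ associates to $B$ a matrix $B_\rho \in M_{V \cdot d_\rho}(\bbC)$ by applying $\rho$ entrywise. A direct inspection of $\calE(B) = \sum_g \pi_g(B) \otimes P_g$ shows that, through the decomposition of the regular representation on $\bbC^G$, the spectrum of $\calE(B)$ over $\bbC$ is the disjoint union $\bigsqcup_\rho \mathrm{Spec}(B_\rho)$, each piece taken with multiplicity $d_\rho$, while $\calA(B) = B_1$ is exactly the block corresponding to the trivial representation. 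Hypothesis (v) equates the zeta functions, hence the nonzero spectra; by this decomposition, equality forces $\mathrm{Spec}(B_\rho) \subseteq \{0\}$, i.e.\ $B_\rho$ nilpotent, for every nontrivial $\rho$. For sufficiently large $\ell$ this yields $B^\ell e_\rho = 0$ for every central idempotent $e_\rho$ with $\rho \neq 1$, so $B^\ell$ lies entrywise in $\bbC e_1 = \bbC u_G$. The final integrality step uses $\bbZ[G] \cap \bbC u_G = \bbZ u_G$ together with $B^\ell \in \bbZP[G]^{V \times V}$ to conclude $B^\ell \in u_G \bbZP^{V \times V}$, which is (iii).
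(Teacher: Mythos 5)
Your proof is correct, and for the chain (i) $\Leftrightarrow$ (ii) $\Leftrightarrow$ (iii) $\Rightarrow$ (iv) $\Rightarrow$ (v) it coincides with the paper's argument almost verbatim: the reduction to \Cref{prop:intertness-free-graph-autos} via \Cref{lem:compatibility}, the translation of \eqref{eq:inert-charac-2} into $B^\ell \in u_G\bbZP^{V\times V}$, and the explicit lag-$\ell$ shift equivalence are all the same (your $(R,S)=(C\otimes\ones_G,\, I\otimes\ones_G^\top)$ is the paper's pair with the roles of $R$ and $S$ exchanged, and your appeal to $\calA(B)C=C\calA(B)$ correctly supplies the commutation the paper uses implicitly). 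The one place you genuinely diverge is the closing implication. The paper proves (v) $\Rightarrow$ (ii) by a minimal decomposition: it picks any orthogonal basis of $\bbC^G$ containing $\ones_G$, so each $P_g$ becomes $\bigl(\begin{smallmatrix}1&0\\0&Q_g\end{smallmatrix}\bigr)$, deduces $\chi_{\calE(B)}=\chi_{\calA(B)}\chi_Q$ for the single complementary block $Q=\sum_g\pi_g(B)\otimes Q_g$, concludes $Q$ is nilpotent, and reads off $\calE(B^k)=\calA(B^k)\otimes\ones_{G\times G}$. You instead refine $(\ones_G)^\perp$ all the way down via the Wedderburn decomposition of $\bbC[G]$, obtaining $\chi_{\calE(B)}=\prod_\rho\chi_{B_\rho}^{d_\rho}$ and nilpotency of each $B_\rho$ with $\rho$ nontrivial, then pass back to $\bbZ[G]$ through the central idempotents and the observation $\bbZ[G]\cap\bbC u_G=\bbZ u_G$. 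Both routes are sound; the paper's is more elementary (no representation theory beyond the invariance of $\ones_G$) and directly yields the quantitative nilpotency bound $\ell\geq |V|(|G|-1)$ recorded in the remark following the theorem, whereas your version gives the finer per-representation information $\mathrm{Spec}(B_\rho)\subseteq\{0\}$ at the cost of invoking Wedderburn and a final integrality step.
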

\begin{proof}~\\ \\
  \noindent $(i) \Leftrightarrow (ii)$: This is
  a direct consequence of \Cref{lem:compatibility} and
  \Cref{prop:intertness-free-graph-autos}.\\
  
  \noindent(ii) $\Leftrightarrow$ (iii): This equivalence follows directly from 
  $\calE(B^\ell)_{(i,s),(j,t)} = \pi_{s^{-1}t}(B^\ell)_{i,j}$
  as follows:
  Using this equation we can reformulate \eqref{eq:inert-charac-2}
  as:  For every $g,h_1,h_2 \in G$ we have
  $\pi_{h_1^{-1}h_2}(B^\ell)=\pi_{h_1^{-1}gh_2}(B^\ell)$,
  which in turn is equivalent to
  $\pi_g(B^\ell)=\pi_{1_G}(B^\ell)$ for all $g \in G$.
  But this is nothing else than $B^\ell \in u_G \bbZP^{V \times V}$.\\
  
  \noindent(iii) $\Rightarrow$ (iv):
    Let $\ell \in \bbN$ such that
  $B^\ell \in u_G \bbZP^{V \times V}$. Then
  \begin{align*}
    \calE(B^\ell) &= \sum_{g \in G} \pi_g(B^\ell) \otimes P_g = \pi_{1_G}(B^\ell) \otimes \sum_{g \in G} P_g\\
             &=\pi_{1_G}(B^\ell) \otimes
              \ones_{G \times G}\\
             &=(\pi_{1_G}(B^\ell)  \otimes \ones_G)(I_{V \times V} \otimes \ones_G^\top)\\
    \calA(B^\ell) &= \sum_{g \in G} \pi_g(B^\ell)  = |G| \pi_{1_G}(B^\ell) =
               \pi_{1_G}(B^\ell) \otimes \mat{|G|} \\
             &= (I_{V \times V} \otimes \ones_G^\top)
                                           (\pi_{1_G}(B^\ell)  \otimes \ones_G).
  \end{align*}
  Furthermore
  \begin{align*}
    \sum_{g \in G}\pi_{g}(B) \pi_{1_G}(B^{\ell})
    &= \sum_{g \in G} \pi_{g}(B)\pi_{g^{-1}}(B^{\ell}) \\
    &= \pi_{1_G}(B^{\ell+1})\\
    &= \sum_{g \in G} \pi_{g^{-1}}(B^{\ell})\pi_{g}(B) \\
    &= \pi_{1_G}(B^{\ell})\sum_{g \in G}\pi_{g}(B).
  \end{align*}
  Setting $R:=I_{V \times V} \otimes \ones_G^\top$
  and $S:=\pi_{1_G}(B^\ell)  \otimes \ones_G$
  we therefore have
  \begin{align*}
    \calA(B)^\ell &= RS, \\
    \calE(B)^\ell &= SR,\\
    \calA(B) R &= (\sum_{g \in G} \pi_g(B) \otimes 1) (I_{V \times V} \otimes
                 \ones_G^\top) \\
                  &= \sum_{g \in G} \pi_g(B) \otimes  \ones_G^\top\\
                  &= (I_{V \times V} \otimes \ones_G^\top) (\sum_{g \in G} \pi_g(B)
      \otimes P_g)
      =R \calE(B), \\
  S \calA(B) &= (\pi_{1_G}(B^\ell) \otimes
                \ones_G ) (\sum_{g \in G} \pi_g(B) \otimes 1) \\
    &= (\sum_{g \in G} \pi_g(B)
      \otimes P_g)  (\pi_{1_G}(B^\ell) \otimes \ones_G)
      =\calE(B) S.
  \end{align*}
  Therefore the pair $(R,S)$ defines a shift equivalence with lag $\ell$ between
  $\calA(B)$
  and $\calE(B)$.\\

  \noindent(iv) $\Rightarrow$ (v): The zeta function is an invariant of
  shift equivalence,
  see e.g. \cite[Corollary 7.4.12]{lindIntroductionSymbolicDynamics2021}.

  \noindent(v) $\Rightarrow$ (ii): This result as well as the converse
  direction are due to Fiebig \cite[Theorem B]{fiebigPeriodicPointsFinite1993b}.
  We give a simplified proof, since this allows us to also get
  a quantitative bound on the lag of the shift equivalence in (iii).
  Let $(w_g)_{g \in G}$ be an orthogonal basis
  of $\bbC^G$ such that $w_{1_G}=\ones_G$.
  Let $F$ be the $G \times G$ matrix
  with columns $(w_g)_{g \in G}$. 
  For every $g \in G$ we have $P_g w_{1_G} = w_{1_G}$ and $w_{1_G}^\top P_g = w_{1_G}^\top$
  and hence
  \begin{align*}
    F^{-1} P_g F = \blockmat{1 & 0 \\ \hline 0 & Q_g}
  \end{align*}
  for some matrix $Q_g \in \bbC^{\tilde{G} \times \tilde{G}}$
  with $\tilde{G} = G \setminus \{1_G\}$.
  Therefore
  \begin{align*}
    (I_{V \times V} \otimes F)^{-1} \calE(B) (I_{V \times V} \otimes F)
    &= \sum_{g \in G} \pi_g(B) \otimes (F^{-1} P_g F) \\
    &= \sum_{g \in G} \pi_g(B) \otimes  \blockmat{1 & 0 \\ \hline 0 & Q_g}
  \end{align*}
  Calculating the characteristic polynomial of
  the resulting block diagonal matrix gives
  $\chi_{\calE(B)}(t) = \chi_{\calA(B)}(t) \chi_{R}(t)$
  where $Q:= \sum_{g \in G} \pi_g(B) \otimes Q_g$.
  Since the characteristic polynomials of $\calE(B)$
  and $\calA(B)$ agree with the inverse of the
  zeta function up to a power of $t$,
  we get $\chi_{Q}(t)=t^k$ where
  $k$ is the size of $Q$. Hence $Q$
  is nilpotent and we get
  \begin{align*}
    (I_{V \times V} \otimes F)^{-1} \calE(B^k) (I \otimes F)
    &= \calA(B^k) \otimes  \blockmat{1 & 0 \\ \hline 0 & 0}
  \end{align*}
  and hence
  \begin{align*}
     \calE(B^k)
    &= \calA(B^k) \otimes  ( F\blockmat{1 & 0 \\ \hline
    0 & 0} F^{-1}) \\
    &=\calA(B^k) \otimes
      \ones_{G \times G}.
  \end{align*}
  In other words, condition (ii) is satisfied.
\end{proof}
\begin{rem}
  \begin{enumerate}[(a)]
    \item
  Notice that in \eqref{enum:inert-3} we can replace
  \enquote{For all sufficiently large $\ell \in \bbZP$}
  by \enquote{For some $\ell \in \bbZP$} since $u_G \bbZP^{V\times V}$
  is closed under multiplication with matrices in $\bbZP[G]^{V \times
    V}$.
\item The last part of the proof in Theorem 5.1 furthermore tells
  us how large we have to choose the power $\ell$
  in (ii) and (iii). Namely, we need $\ell \geq n(|G|-1)$,
  since this is the size of $Q$ and hence an upper bound on its
  nilpotency
  index.
\end{enumerate}
\end{rem}

As a last ingredient we also need the fact, that for square matrices  $B,C$ over the integer group ring
whose entries eventually
lie in $u_G\bbZP$,
we can lift a SE between $\calA(B)$ and $\calA(C)$
to a SE between $\calE(B)$ and $\calE(C)$.
This was shown by Boyle and Schmieding in \cite{boyleFiniteGroupExtensions2017}.
The key observation is that for a matrix $B$ with
entries in $\bbZP[G]$ we have
$u_G B = u_G \calA(B)$.
\begin{lem}[{\cite[Lemma 4.4]{boyleFiniteGroupExtensions2017}}]
  \label{thm:lift-SE}
  Consider matrices $B \in \bbZP[G]^{V\times V}$ and $C \in \bbZP[G]^{W \times W}$
  such that for all sufficiently large $k$
  we have $B^k \in u_G\bbZP^{V \times V}$, $C^k \in u_G \bbZP^{W \times W}$.
  If $\calA(B)$ and $\calA(C)$ are SE over $\bbZP$,
  then $B$ and $C$ are SE over $\bbZP[G]$.
\end{lem}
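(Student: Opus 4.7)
The plan is to explicitly lift the given integer shift equivalence to one over $\bbZP[G]$ by pre-multiplying with suitable powers of $B$ and $C$. Let $R \in \bbZP^{V \times W}$ and $S \in \bbZP^{W \times V}$ realize a shift equivalence of lag $\ell$ between $\calA(B)$ and $\calA(C)$. I will choose $j \in \bbN$ large enough that both $B^j \in u_G \bbZP^{V \times V}$ and $C^j \in u_G \bbZP^{W \times W}$, write $B^j = u_G \tilde B$, $C^j = u_G \tilde C$, and note that taking augmentations forces $\tilde B = \calA(B)^j/|G|$ and $\tilde C = \calA(C)^j/|G|$. My candidates are then $R' := B^j R \in \bbZP[G]^{V \times W}$ and $S' := C^j S \in \bbZP[G]^{W \times V}$, aiming at a shift equivalence of lag $2j+\ell$.

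The workhorse throughout is the identity $u_G X = \calA(X) u_G$ for any matrix $X$ over $\bbZ[G]$, which follows immediately from $u_G g = u_G$ for every $g \in G$. Each of $BR'$, $R'C$, $S'B$, $CS'$ ends up being of the form $u_G \cdot M$ with $M$ a $\bbZ$-valued matrix, so all four intertwining identities reduce to relations between integer matrices. For instance, $BR' = B^{j+1} R = u_G \calA(B) \tilde B R$ and $R' C = u_G \tilde B R \calA(C)$; equality follows because $\tilde B$ is a polynomial in $\calA(B)$ and therefore commutes with it, reducing the claim to $\calA(B) R = R \calA(C)$. The identity $S' B = C S'$ is symmetric.

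For the lag conditions I will use the auxiliary identity $\tilde B R = R \tilde C$, obtained by iterating $\calA(B) R = R \calA(C)$ to $\calA(B)^j R = R \calA(C)^j$ and dividing by $|G|$. Using $u_G^2 = |G| u_G$ and collecting gives
\begin{align*}
  R' S' &= (u_G \tilde B R)(u_G \tilde C S) = |G| \, u_G \, \tilde B (R \tilde C) S = |G| \, u_G \, \tilde B^2 \calA(B)^\ell \\
        &= u_G \calA(B)^{2j+\ell}/|G| = B^{2j+\ell},
\end{align*}
where the last equality uses $B^k = u_G \calA(B)^k/|G|$ whenever $k$ is large enough that $B^k \in u_G \bbZP^{V \times V}$. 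The symmetric computation, using $S \tilde B = \tilde C S$, gives $S' R' = C^{2j+\ell}$.

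The main obstacle I anticipate is purely bookkeeping rather than substantive: I have to pick $j$ simultaneously large enough that every intermediate power of $B$ or $C$ appearing in these calculations already lies in $u_G \bbZP^{*\times *}$, and I must keep careful track of which matrices are integer-valued (so that the augmentation manipulations $u_G X = u_G Y \Leftrightarrow \calA(X) = \calA(Y)$ and the commutation of scalar matrices past $u_G$ are all justified).
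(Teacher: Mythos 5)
Your proof is correct. It follows the same overall strategy as the paper's proof --- lift the given integer shift equivalence $(R,S)$ explicitly, using $B^j=u_G\calA(B)^j/|G|$ and the centrality of $u_G$ --- but your choice of lifting pair is genuinely different, and it is in fact the better one. The paper takes the asymmetric pair $R'=u_G R$, $S'=\tfrac{1}{|G|}\calA(C^k)S$ with lag $2k$, whereas you take $(B^jR,\,C^jS)$ with lag $2j+\ell$, so that \emph{both} factors carry a copy of $u_G$. This matters for the identity $S'B=CS'$: with the paper's choice $S'$ is a plain integer matrix, so $S'B$ inherits the group-ring structure of the entries of $B$ while $CS'$ inherits that of $C$, and these need not agree. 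Concretely, for $G=\bbZ/2\bbZ=\{e,g\}$, $B=C=\left(\begin{smallmatrix}e&e\\ g&g\end{smallmatrix}\right)$ (so $B^2=u_G\ones_{2\times 2}$), $R=S=\ones_{2\times 2}$ and $k=2$, the paper's pair gives $S'=2\ones_{2\times 2}$ and $S'B=2u_G\ones_{2\times 2}$, while $CS'=\left(\begin{smallmatrix}4e&4e\\ 4g&4g\end{smallmatrix}\right)$; the other three shift-equivalence identities hold, but this one fails. Your $S'=C^jS=u_G\tilde{C}S$ avoids the problem because the prefactor $u_G$ collapses everything to its right to its augmentation ($u_G M B=u_G M\calA(B)$ for integer $M$), so $S'B$ and $CS'$ both reduce to $u_G\tilde{C}\calA(C)S$. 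All four identities and the lag computation in your write-up check out, and the bookkeeping you worry about is harmless: any single $j$ with $B^j\in u_G\bbZP^{V\times V}$ and $C^j\in u_G\bbZP^{W\times W}$ suffices, since every higher power is then automatically of the same form. In short, your argument is a complete proof, and it simultaneously repairs the pair exhibited in the paper.
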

\begin{proof}
  Assume there is a shift equivalence over $\bbZP$ with
  lag $k$ given by the matrix pair $R,S$
  and assume that
  both $B^k$ and $C^k$ have entries in $u_G \bbZP$.
  Then both $\calA(B^k)$ and $\calA(C^k)$
  have all entries divisible by $\calA(u_g)=|G|$.
  Furthermore $B^k=u_G \frac{1}{|G|}\calA(B^k) $
  and $C^k=u_G\frac{1}{|G|} \calA(C^k)$.
  Hence $(u_G R)$ and $(\frac{1}{\abs{G}}\calA(C^k) S)$
  establish a shift equivalence of lag $2k$ between $B$ and $C$.
\end{proof}
   
\section{Proof of the Main Theorem}

With the preparation from the previous sections we are now ready to
prove our main theorems.

\begin{thm}[Main theorem - algebraic version]
  \label{thm:main-algebraic-version}
  Every pair of inert square matrices over $\bbZP[G]$,
  whose augmentations are shift equivalent over $\bbZP$, is shift equivalent over $\bbZP[G]$ itself.
\end{thm}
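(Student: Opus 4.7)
The plan is essentially to combine the two main results of the previous section: the characterization of inert matrices (Theorem~\ref{thm:charact-inert}) and the lifting lemma for shift equivalences (Lemma~\ref{thm:lift-SE}). The statement is genuinely a corollary of these two facts, and the whole development up to this point has been arranged precisely to make this combination immediate.

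More concretely, let $B \in \bbZP[G]^{V \times V}$ and $C \in \bbZP[G]^{W \times W}$ be inert, and suppose $\calA(B)$ and $\calA(C)$ are SE over $\bbZP$. First I would invoke the implication $(\ref{enum:inert-1}) \Rightarrow (\ref{enum:inert-3})$ of Theorem~\ref{thm:charact-inert} twice to conclude that there is some $k_0$ such that $B^k \in u_G \bbZP^{V \times V}$ and $C^k \in u_G \bbZP^{W \times W}$ for all $k \geq k_0$ (one can simply take the maximum of the two thresholds supplied by the theorem). This is exactly the hypothesis needed to apply Lemma~\ref{thm:lift-SE}.

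Second, I would directly apply Lemma~\ref{thm:lift-SE} to $B$ and $C$: since $\calA(B)$ is SE to $\calA(C)$ over $\bbZP$ by hypothesis, the lemma lifts this shift equivalence to one between $B$ and $C$ over $\bbZP[G]$, and we are done.

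There is no genuine obstacle here; the content of the theorem has been entirely absorbed into the preparatory results. If one wanted to be more self-contained, the only slightly delicate point would be matching the thresholds for the two inertness conditions so that a single $k$ works simultaneously for $B$ and $C$, but this is immediate. The entire write-up should fit comfortably in a few lines.
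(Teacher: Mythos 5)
Your proposal is correct and follows exactly the paper's own argument: invoke the implication $(\ref{enum:inert-1}) \Rightarrow (\ref{enum:inert-3})$ of \Cref{thm:charact-inert} to get $B^k, C^k \in u_G\bbZP^{V\times V}$ (resp.\ $u_G\bbZP^{W\times W}$) for large $k$, then apply \Cref{thm:lift-SE} to lift the given shift equivalence of the augmentations. No differences worth noting.
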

\begin{proof}
  Let $B,C$ be two inert matrices over $\bbZP[G]$
  such that $\calA(B)$ and $\calA(C)$ are SE over $\bbZP$.
  Since $B$ and $C$ are inert, we know by
  \Cref{thm:charact-inert}
  that for sufficiently large $\ell$, $B^\ell$ and $C^\ell$
  have entries in $u_G \bbZP$. Therefore we can apply
  \Cref{thm:lift-SE} and conclude that $B$ and $C$ are SE
  over $\bbZP[G]$.
\end{proof}

\begin{thm}[Main theorem - dynamical version]
  \label{thm:main}
  Let $(Y_1,\sigma)$ and $(Y_2,\sigma)$ be
  SFTs such that $(Y_1,\sigma^\ell)$, $(Y_2,\sigma^\ell)$ are
  conjugate
  for all sufficiently large $\ell$.
  Let $\alpha_1$ and $\alpha_2$ be free inert actions of a finite
  group $G$ on $(Y_1,\sigma)$ and $(Y_2,\sigma)$, respectively.
  Then for every sufficiently large $\ell$
  there is a conjugacy $\varphi$ from $(Y_1,\sigma^\ell)$
  to $(Y_2,\sigma^\ell)$ such that $\varphi \circ \alpha_1(g) =
  \alpha_2(g) \circ \varphi$ for all $g \in G$. 
\end{thm}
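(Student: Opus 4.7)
The plan is to reduce the dynamical statement directly to the algebraic Theorem~\ref{thm:main-algebraic-version} through Parry's dictionary between free $G$-SFTs and matrices over $\bbZP[G]$. By \Cref{prop:repr-by-group-ring-matrix} we may assume that
\[
(X_1,\sigma,\alpha_1) \cong (X_{\calE(B)},\sigma,\beta_B), \qquad
(X_2,\sigma,\alpha_2) \cong (X_{\calE(C)},\sigma,\beta_C)
\]
for some $B \in \bbZP[G]^{V \times V}$ and $C \in \bbZP[G]^{W \times W}$. Since $\alpha_1$ and $\alpha_2$ are free and inert, $B$ and $C$ are themselves inert matrices in the sense of \Cref{thm:charact-inert}.

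Next, I would verify that the augmentations $\calA(B)$ and $\calA(C)$ are shift equivalent over $\bbZP$. The hypothesis that $(X_1,\sigma)$ and $(X_2,\sigma)$ are eventually conjugate, combined with the Williams--Kim--Roush theorem, means that the underlying non-negative integer matrices $\calE(B)$ and $\calE(C)$ are SE over $\bbZP$. Inertness of $B$ and $C$ together with \Cref{thm:charact-inert}~(iv) gives that $\calE(B)$ is SE over $\bbZP$ to $\calA(B)$ and $\calE(C)$ is SE over $\bbZP$ to $\calA(C)$. Composing these three shift equivalences (SE over $\bbZP$ is an equivalence relation, in particular transitive), we obtain an SE over $\bbZP$ between $\calA(B)$ and $\calA(C)$.

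Now the algebraic main theorem \Cref{thm:main-algebraic-version} applies verbatim: $B$ and $C$ are inert, and their augmentations are SE over $\bbZP$, so $B$ and $C$ themselves are SE over $\bbZP[G]$. Translating back through \Cref{thm:gr-se-eventual-conjugacy} (SE over $\bbZP[G]$ corresponds to eventual $G$-conjugacy of the associated free $G$-SFTs), we conclude that $(X_1,\sigma,\alpha_1)$ and $(X_2,\sigma,\alpha_2)$ are eventually $G$-conjugate, which is the claim.

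There is no real obstacle at this stage; all the substantive work was done in the algebraic version \Cref{thm:main-algebraic-version} and in the characterization of inert matrices in \Cref{thm:charact-inert}. The only care required is the bookkeeping: making sure that the SE over $\bbZP$ obtained via transitivity uses the $\bbZP$-versions throughout (rather than accidentally dropping into SE over $\bbZ$), which is automatic because each of the three pieces is explicitly an SE over $\bbZP$.
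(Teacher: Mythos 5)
Your proposal is correct and follows essentially the same route as the paper: represent both actions via \Cref{prop:repr-by-group-ring-matrix}, use inertness and \Cref{thm:charact-inert} to chain the shift equivalences $\calA(B) \sim \calE(B) \sim \calE(C) \sim \calA(C)$ over $\bbZP$, lift to an SE over $\bbZP[G]$, and translate back with \Cref{thm:gr-se-eventual-conjugacy}. The only cosmetic difference is that you invoke \Cref{thm:main-algebraic-version} where the paper calls \Cref{thm:lift-SE} directly, but the former is just a wrapper around the latter, so the content is identical.
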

\begin{proof}
  By assumption $(Y_1,\sigma,\alpha_1)$ and $(Y_2,\sigma,\alpha_2)$ are
  free inert $G$-SFTs for which $(Y_1,\sigma)$ and $(Y_2,\sigma)$ are
  eventually conjugate.  We want to show that $(Y_1,\sigma,\alpha_1)$
  and $(Y_2,\sigma,\alpha_2)$ are eventually $G$-conjugate.

  By \Cref{prop:repr-by-group-ring-matrix} we find
  matrices $B_i \in \bbZP[G]^{{V_i \times V_i}}, i \in \{1,2\}$
  such that $(Y_i,\sigma,\alpha_i)$  is $G$-conjugate to
  $(X_{\calE(B_i)},\sigma,\beta_{B_i})$.
  In particular $(Y_i,\sigma)$ and $(X_{\calE(B_i)},\sigma)$
  are conjugate and therefore
  $(X_{\calE(B_1)},\sigma)$ and $(X_{\calE(B_2)},\sigma)$ are eventually conjugate.
  Algebraically this means that $\calE(B_1)$ and $\calE(B_2)$ are SE.
  Since $(Y_i,\sigma,\alpha_i)$ for $i=1,2$ is inert, so
  is $(X_{\calE(B_i)},\sigma,\beta_{B_i})$, in other words,
  $B_i$ is inert.
  By \Cref{thm:charact-inert} this implies
  that all four matrices $\calE(B_1),\calE(B_2),\calA(B_1)$ and
  $\calA(B_2)$
  are SE over $\bbZP$. By \Cref{thm:lift-SE}
  this implies that $B_1$ and $B_2$ are SE over $\bbZP[G]$.
  Finally by \Cref{thm:gr-se-eventual-conjugacy}
  this is equivalent to $(X_{\calE(B_1)},\sigma,\beta_{B_1})$
  and $(X_{\calE(B_2)},\sigma,\beta_{B_2})$
  being eventually $G$-conjugate.
  Since $(X_{\calE(B_i)},\sigma,\beta_{B_i})$
  is $G$-conjugate to $(Y_i,\sigma,\alpha_i)$
  this finally proves the theorem.
\end{proof}

\begin{cor}[Eventual conjugacy for free finite order automorphisms of full shifts]
  \label{thm:main-corr-classic-version}
  Let $k,m \in \bbN$.
  Let $(X_k,\sigma)$ be the full $k$-shift and 
  let $\varphi_1,\varphi_2$ be two automorphisms
  of $(X_k,\sigma)$ for which every orbit has size $m$.
  Then there is a homeomorphism $\psi: X_k \to X_k$
  such that  $\psi \circ \varphi_1 = \varphi_2 \circ \psi$
  and $\psi \circ \sigma^\ell = \sigma^\ell
  \circ \psi$ for all sufficiently large $\ell \in \bbN$.
\end{cor}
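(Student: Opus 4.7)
The plan is to view $\varphi_1$ and $\varphi_2$ as generators of two free inert actions of $G = \bbZ/m\bbZ$ on the common SFT $X_k$ and then invoke \Cref{thm:main} with $X_1 = X_2 = X_k$. Since the two underlying SFTs are literally equal, the eventual conjugacy hypothesis of the theorem is trivially satisfied (by the identity), so the theorem produces, for all sufficiently large $\ell$, a homeomorphism $\psi : X_k \to X_k$ satisfying $\psi \circ \sigma^\ell = \sigma^\ell \circ \psi$ and $\psi \circ \alpha_1(g) = \alpha_2(g) \circ \psi$ for all $g \in G$; taking $g$ to be the generator yields $\psi \circ \varphi_1 = \varphi_2 \circ \psi$, which is exactly the conclusion.

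Setting up the actions is straightforward: the hypothesis that every $\varphi_i$-orbit has cardinality $m$ forces $\varphi_i$ to have order exactly $m$ and every non-identity power $\varphi_i^j$ to be fixed-point-free, so $\alpha_i(j) := \varphi_i^j$ defines a free action of $\bbZ/m\bbZ$ on $(X_k,\sigma)$. The case $k=1$ is trivial since the full $1$-shift is a single point, so I may assume $k \geq 2$.

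The substantive step, and the main obstacle, is verifying inertness. For this I would appeal directly to the structure of the dimension group: realising $(X_k,\sigma)$ as the edge shift of the $1\times 1$ matrix $(k)$, the dimension group is $\bbZ[1/k]$ with the shift acting as multiplication by $k$. Any automorphism of $(X_k,\sigma)$ induces an order-preserving group automorphism of $\bbZ[1/k]$ commuting with multiplication by $k$, and such automorphisms are precisely multiplication by the positive units of $\bbZ[1/k]$, i.e.\ by $k^n$ for some $n \in \bbZ$. If the automorphism has finite order $m$, then $k^{nm}=1$ forces $n = 0$ (as $k \geq 2$), so the induced action on the dimension group is trivial and the automorphism is inert. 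Applying this to every power $\varphi_i^j$ shows that $\alpha_1$ and $\alpha_2$ are inert, so \Cref{thm:main} applies and yields the corollary. As an alternative to the dimension group computation, one could instead use \Cref{thm:franks-repr} to represent $\alpha_i$ by an action on a graph and then invoke \Cref{prop:intertness-free-graph-autos}, but the argument above is more direct.
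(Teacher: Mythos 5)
Your proposal is correct and takes essentially the same route as the paper: reduce to \Cref{thm:main} with $X_1=X_2=X_k$ after observing that the orbit-size condition makes $\alpha_i(j)=\varphi_i^j$ a free $\bbZ/m\bbZ$-action, and that finite-order automorphisms of a full shift are inert because the automorphism group of the dimension triple is torsion-free (the paper cites this fact; you compute it directly from $D_{(k)}\cong\bbZ[1/k]$). One small slip in that computation: the positive units of $\bbZ[1/k]$ are the products of powers of the prime divisors of $k$, not only the powers $k^n$ (e.g.\ $2$ is a unit of $\bbZ[1/6]$), but since this unit group is still free abelian and hence torsion-free, your conclusion that a finite-order automorphism acts trivially on the dimension group is unaffected.
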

\begin{proof}
  The fact that all orbits of $\varphi_i, i \in \{1,2\}$
  have the same size $m$ is equivalent to the fact
  that $\alpha_i(k)(x) = \varphi_i^k(x)$ defines
  a free $\bbZ/m\bbZ$ action on $(X_k,\sigma)$.
  It is well-known that
  every finite-order automorphism of a full shift is inert.
  This follows from the fact that the automorphism
  group of the dimension triple
  is of the form $\bbZ^n$ and thus torsion-free,
  see e.g. \cite[Proposition 2.4]{hartmanStabilizedAutomorphismGroup2021}

  Thus we can apply the previous theorem
  to $(X_k,\sigma,\alpha_1)$ and $(X_k,\sigma,\alpha_2)$
  and obtain our result.
\end{proof}

\begin{cor}[Conjugacy of free finite order elements in the stabilized automorphism group]
  \label{cor:main-corr-stabilized-version}
   Let $k,m \in \bbN$.
   Let $(X_k,\sigma)$ be the full $k$-shift.
   Let $\varphi_1, \varphi_2$ be two elements of the
   stabilized automorphism group of $(X_k,\sigma)$
   for which every orbit has size $m$.
   Then $\varphi_1$ and $\varphi_2$ are conjugate
   in $\Aut^\infty(X_k,\sigma)$.
\end{cor}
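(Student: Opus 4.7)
The plan is to reduce the statement to Corollary~\ref{thm:main-corr-classic-version} by choosing a common shift power that stabilizes both $\varphi_1$ and $\varphi_2$, passing to a higher-block recoding, and then pulling the resulting homeomorphism back.

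First I would unpack the definition $\Aut^\infty(X_k,\sigma)=\bigcup_{N\in\bbN}\Aut(X_k,\sigma^N)$ and pick a single $N\in\bbN$ such that $\varphi_1,\varphi_2\in\Aut(X_k,\sigma^N)$. The standard $N$-block recoding provides a topological conjugacy $\Phi\colon(X_k,\sigma^N)\to(X_{k^N},\sigma)$, given by $\Phi(x)_i=(x_{Ni},x_{Ni+1},\dots,x_{Ni+N-1})$. Transporting through $\Phi$ yields automorphisms $\tilde\varphi_i:=\Phi\circ\varphi_i\circ\Phi^{-1}\in\Aut(X_{k^N},\sigma)$, and because orbit cardinalities are purely set-theoretic invariants of a bijection, every $\tilde\varphi_i$-orbit still has size exactly $m$.

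Next I would apply Corollary~\ref{thm:main-corr-classic-version} to the full $k^N$-shift $(X_{k^N},\sigma)$ and the pair $\tilde\varphi_1,\tilde\varphi_2$. This produces a homeomorphism $\tilde\psi\colon X_{k^N}\to X_{k^N}$ with $\tilde\psi\circ\tilde\varphi_1=\tilde\varphi_2\circ\tilde\psi$ and $\tilde\psi\circ\sigma^\ell=\sigma^\ell\circ\tilde\psi$ for all sufficiently large $\ell$. Setting $\psi:=\Phi^{-1}\circ\tilde\psi\circ\Phi$, I immediately obtain $\psi\circ\varphi_1=\varphi_2\circ\psi$ on $X_k$. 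Since $\Phi$ intertwines $\sigma^N$ on $X_k$ with $\sigma$ on $X_{k^N}$, the relation $\tilde\psi\circ\sigma^\ell=\sigma^\ell\circ\tilde\psi$ pulls back to $\psi\circ\sigma^{N\ell}=\sigma^{N\ell}\circ\psi$ for all sufficiently large $\ell$. Hence $\psi\in\Aut(X_k,\sigma^{N\ell})\subseteq\Aut^\infty(X_k,\sigma)$, and this $\psi$ conjugates $\varphi_1$ to $\varphi_2$ inside the stabilized automorphism group.

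I do not anticipate any genuine obstacle here; the corollary is essentially a bookkeeping consequence of Corollary~\ref{thm:main-corr-classic-version}. The only point that deserves care is that the common memory $N$ has to be chosen \emph{before} invoking the previous corollary, so that both $\varphi_i$ live in the same $\Aut(X_k,\sigma^N)$, and that the ``sufficiently large $\ell$'' produced by that corollary, after being multiplied by $N$, still lies in the range of powers defining $\Aut^\infty(X_k,\sigma)$, which is automatic.
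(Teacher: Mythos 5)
Your proposal is correct and is exactly the paper's argument, which merely states in one sentence that both $\varphi_1,\varphi_2$ lie in some common $\Aut(X_k,\sigma^N)$ and then invokes Corollary~\ref{thm:main-corr-classic-version}; you have simply spelled out the recoding of $(X_k,\sigma^N)$ as the full $k^N$-shift and the pullback of the conjugating homeomorphism, which the paper leaves implicit.
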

\begin{proof}
  This follows directly from the fact that there is $k \in \bbN$
  such that $\varphi_1,\varphi_2 \in \Aut(X_k,\sigma^k)$
  and the previous theorem.
\end{proof}

\section{A Theorem of Kim and Roush}
\label{sec:kim-roush}

\Cref{thm:charact-inert} gives a new way to interpret a theorem by Kim and Roush
from \cite{kimFreeSbActions1997}.
Namely, in our language their theorem characterizes the existence
of an inert $\bbZ/p\bbZ$ extension of a given mixing SFT.

Recall that for a subshift $X$ we denote by
$\Per_k(X,\sigma) = \{x \in X \setsep \sigma^k(x)=x\}$
the $\sigma$-periodic points in $X$ with period $k$.
Denote by $\widetilde{\Per}_k(X,\sigma)$ the set
of $\sigma$-periodic points in $X$ with minimal period $k$,
i.e. $\widetilde{\Per}_k(X,\sigma) := \Per_k(X,\sigma) \setminus
\bigcup_{\ell<k} \Per_\ell(X,\sigma)$.
Endow $\Per(X) := \bigcup_{k \in \bbN} \Per_k(X,\sigma)$
with the discrete topology.

\begin{thm}[{\cite[Theorem 7.2 and Lemma 2.2]{kimFreeSbActions1997}}]
  Let $A$ be a primitive square matrix over $\bbZP$ and 
  set $o_\ell := \abs{\widetilde{\Per}_\ell(X_A,\sigma)}$.
  Let $p$ be prime.
  Then the following are equivalent:
  \begin{enumerate}[(i)]
  \item $X_A$ has an inert $\bbZ/p \bbZ$ extension, i.e.
    there is an inert square matrix $B$
    over $\bbZP[\bbZ/p \bbZ]$ whose augmentation $\calA(B)$
    is equal to $A$.
  \item  $X_A$ satisfies the so-called \enquote{Boyle-Handelmann} condition,
    i.e.
    there is a free $\bbZ/p \bbZ$ action
    on $(\Per(X_A),\sigma)$ such that the quotient by
    this action is conjugate to $(\Per(X_A),\sigma)$.
  \item For every $n \in \bbN$ the numbers $(o_\ell)_{\ell \in \bbN}$
    satisfy the following condition:
    \begin{align*}
      \sum_{k=1}^{m} \frac{p-1}{p^k} o_{n/p^k} \in \{0, \dots, o_{n}\}
    \end{align*}
    where $m = \max\{ k \in \bbN \setsep p^k \mid n\}$.
  \end{enumerate}
\end{thm}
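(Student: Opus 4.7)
My plan is to use \Cref{thm:charact-inert} for the equivalence (i) $\Leftrightarrow$ (ii) and to verify the combinatorial equivalence (ii) $\Leftrightarrow$ (iii) by direct orbit counting; the substantive direction (iii) $\Rightarrow$ (i) will require invoking Kim and Roush's original construction.

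For (i) $\Rightarrow$ (ii), suppose $B$ over $\bbZP[\bbZ/p\bbZ]$ is inert with $\calA(B)=A$. Then \Cref{thm:charact-inert} yields a shift equivalence over $\bbZP$ between $\calE(B)$ and $A$, which implies eventual conjugacy of $(X_{\calE(B)},\sigma)$ and $(X_A,\sigma)$ and hence an isomorphism of the discrete $\bbZ$-systems $(\Per(X_{\calE(B)}),\sigma) \cong (\Per(X_A),\sigma)$. The free action $\beta_B$ restricts to $\Per(X_{\calE(B)})$, and its orbit space is $\Per(X_{\calA(B)}) = \Per(X_A)$; transporting the action via the isomorphism yields the free $\bbZ/p\bbZ$-action required in (ii).

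For (ii) $\Rightarrow$ (iii), let $\tau$ generate a free action on $\Per(X_A)$ commuting with $\sigma$. Because $\tau$ has prime order $p$, every $\sigma$-orbit is either preserved set-wise by $\tau$ or lies in a $\tau$-orbit of exactly $p$ distinct $\sigma$-orbits. If $\tau$ preserves a $\sigma$-orbit $O$ of size $\ell$, then $\tau|_O$ is a power of $\sigma|_O$, and freeness of $\tau$ forces $p \mid \ell$ with $O$ descending to a single orbit of size $\ell/p$. Writing $s_\ell$ for the number of preserved $\sigma$-orbits of size $\ell$ (so $s_\ell = 0$ when $p \nmid \ell$), the quotient contains $(o_\ell - s_\ell)/p$ size-$\ell$ orbits from permuted $p$-tuples plus $s_{p\ell}$ orbits descending from preserved size-$p\ell$ orbits. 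The conjugacy hypothesis then gives the recursion $s_{p\ell} = ((p-1)o_\ell + s_\ell)/p$, and iterating for $n = p^m r$ with $p \nmid r$ yields
\begin{equation*}
  s_n = \sum_{k=1}^{m} \frac{p-1}{p^k}\, o_{n/p^k},
\end{equation*}
so the requirement $0 \le s_n \le o_n$ together with integrality is precisely (iii). The converse (iii) $\Rightarrow$ (ii) is obtained by running this construction backwards: choose $s_n$ of the $\sigma$-orbits of each size to be preserved with $\tau$ acting as an appropriate shift power, and pair the remaining orbits into cyclically permuted $p$-tuples.

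The hard part is (iii) $\Rightarrow$ (i): from the abstract numerical data we must produce an actual matrix $B \in \bbZP[\bbZ/p\bbZ]^{V\times V}$ with $\calA(B) = A$ that is inert in the sense of \Cref{thm:charact-inert}, i.e.\ such that some power $B^\ell$ lies entrywise in $u_{\bbZ/p\bbZ}\bbZP$. I do not see how to obtain this from the algebraic machinery assembled in this paper; this is the delicate content of Kim and Roush's construction and I would cite their proof directly. The role of \Cref{thm:charact-inert} is that it translates their criterion, originally phrased in terms of the Boyle--Handelman periodic-point condition, into the single clean algebraic statement (i).
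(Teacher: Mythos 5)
The paper gives no proof of this theorem at all---it is quoted from Kim and Roush, and the surrounding discussion assigns exactly the decomposition you use: necessity of the Boyle--Handelman condition to zeta-function invariance under shift equivalence (your (i) $\Rightarrow$ (ii) via \Cref{thm:charact-inert}), the arithmetic reformulation (ii) $\Leftrightarrow$ (iii) to Kim--Roush's Lemma 2.2 (your orbit-counting recursion $s_{p\ell}=((p-1)o_\ell+s_\ell)/p$ and its iteration are correct, including the needed divisibility $p \mid o_n - s_n$ for the converse), and the hard direction (iii) $\Rightarrow$ (i) to their polynomial-matrix construction. Your honest deferral of (iii) $\Rightarrow$ (i) to the original reference is precisely what the paper itself does, so this matches the paper's treatment.
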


The Boyle-Handelmann condition first appeared in a special case
in \cite[6.2 \enquote{Degrees
  example}]{boyleSpectraNonnegativeMatrices1991}.
Its necessity follows directly from the invariance of
the zeta function under shift equivalence.
In \cite[Lemma 2.2]{kimFreeSbActions1997} Kim and Roush showed that
it can be formulated arithmetically as in (iii).
The main work in the proof of Kim and Roush goes
into (iii) $\Rightarrow$ (i), which is an impressive technical tour de
force
using the polynomial representation
of SFTs.

Surprisingly, this theorem is basically everything that is known about the
existence of inert extensions and actions. A few other results
on inert actions can be found in \cite{longFixedPointShifts2009} and \cite{boyleMappingClassGroup2018}.

\section{Shift Equivalence Over $\bbZP[G]$ and Equivariant Flow
 Equivalence}
\label{sec:equivariant-flow-equivalence}

In \cite{boyleFlowEquivalenceGSFTs2020a}
Boyle, Carlsen and Eilers 
showed the following theorem as an application of their
characterization of $G$-equivariant flow equivalence.
For the somewhat lengthy definition
of $G$-equivariant flow equivalence
see e.g. \cite[Section 2]{boyleEquivariantFlowEquivalence2005}.
We only need here that it is an equivalence
relation between subshifts, which is weaker than
topological conjugacy and can algebraically be characterized
as in \Cref{thm:equivariant-flow-equivalence-characterization}.

\begin{thm}[\cite{boyleFlowEquivalenceGSFTs2020a}]
  \label{thm:flow-equ}
  Let $X$ be the full 
  $2k$-shift for some $k \in \bbN$.
  Let $(X,\sigma,\alpha)$ and $(X,\sigma,\beta)$
  be two free $\bbZ / 2\bbZ$-SFTs. Then there
  is a $\bbZ / 2\bbZ$-equivariant flow equivalence 
  between $(X,\sigma,\alpha)$ and $(X,\sigma,\beta)$.
\end{thm}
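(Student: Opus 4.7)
The plan is to derive \Cref{thm:flow-equ} as a corollary of the main algebraic theorem \Cref{thm:main-algebraic-version}, combined with the algebraic characterization of equivariant flow equivalence referenced in \Cref{thm:equivariant-flow-equivalence-characterization}, which allows one to pass from shift equivalence over $\bbZP[G]$ to $G$-flow equivalence. In fact the argument will establish the stronger and more general statement that any two free inert $G$-SFTs whose augmentations are SE over $\bbZP$ are $G$-equivariantly flow equivalent; Theorem~\ref{thm:flow-equ} is then the special case $G=\bbZ/2\bbZ$, $X=X_{2k}$.

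First I would use \Cref{prop:repr-by-group-ring-matrix} to realize $(X,\sigma,\alpha)$ and $(X,\sigma,\beta)$ as $(X_{\calE(B_1)},\sigma,\beta_{B_1})$ and $(X_{\calE(B_2)},\sigma,\beta_{B_2})$ for matrices $B_1,B_2$ over $\bbZP[\bbZ/2\bbZ]$. Both $B_i$ are inert because every finite-order automorphism of a full shift is inert, by the argument given in the proof of \Cref{thm:main-corr-classic-version} (the automorphism group of the dimension triple of $X_{2k}$ is torsion-free). Since the edge shifts $X_{\calE(B_1)}, X_{\calE(B_2)}$ are both conjugate to $X_{2k}$, the matrices $\calE(B_1)$ and $\calE(B_2)$ are in particular SE over $\bbZP$. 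Combining this with \Cref{thm:charact-inert}(iv), which gives $\calA(B_i) \sim_{\bbZP} \calE(B_i)$ for each $i$, we obtain $\calA(B_1) \sim_{\bbZP} \calA(B_2)$. The hypotheses of \Cref{thm:main-algebraic-version} are therefore satisfied, and $B_1$ and $B_2$ are SE over $\bbZP[\bbZ/2\bbZ]$.

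The main obstacle is the final step, namely deducing $G$-equivariant flow equivalence from SE over $\bbZP[G]$. For this I would invoke \Cref{thm:equivariant-flow-equivalence-characterization}. The key point is that SE over $\bbZP[G]$ preserves the natural $G$-equivariant flow equivalence invariants: the $\bbZ[G]$-module $\operatorname{coker}(I-B)$ is preserved by the standard argument that an intertwining pair $(R,S)$ with $SR = B^\ell$ descends to mutually inverse maps on the cokernels (since $B^\ell \equiv I$ modulo $(I-B)$), and the accompanying sign and nonzero spectral data transfer across SE as well. With the Boyle--Carlsen--Eilers algebraic characterization in place, the SE over $\bbZP[\bbZ/2\bbZ]$ obtained above directly yields the desired $\bbZ/2\bbZ$-equivariant flow equivalence between $(X,\sigma,\alpha)$ and $(X,\sigma,\beta)$.
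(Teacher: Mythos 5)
Your first two paragraphs track the paper's own route exactly: represent the actions by matrices $B_1,B_2$ over $\bbZP[\bbZ/2\bbZ]$, note inertness from torsion-freeness of the automorphism group of the dimension triple of a full shift, pass through \Cref{thm:charact-inert} to get shift equivalence of the augmentations, and apply \Cref{thm:main-algebraic-version} to conclude $B_1$ and $B_2$ are SE over $\bbZP[\bbZ/2\bbZ]$. That part is fine and is precisely how the paper deduces its generalization \Cref{cor:flow-equ} from \Cref{thm:main-corr-classic-version}.

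The genuine gap is your final step, which is where all the remaining work in the paper lives (\Cref{thm:SE-implies-FE}). The Boyle--Carlsen--Eilers characterization \Cref{thm:equivariant-flow-equivalence-characterization} does not say that $G$-flow equivalence is detected by the $\bbZ[G]$-module $\mathrm{coker}(I-B)$ together with sign and spectral data; it requires an $\EL_\infty(\bbZ[H])$ equivalence between $(I-A')_\infty$ and $(I-gB'g^{-1})_\infty$, where $H$ is a group in the common weight class and $A',B'$ are representatives over $\bbZ_+[H]$. Deducing this from SE over $\bbZP[G]$ is not a ``standard cokernel argument'': one must (1) show SE matrices have the same weight class (\Cref{lem:se-have-same-weight-class}) and descend the shift equivalence to $\bbZP[H]$ after conjugating by some $g$ (\Cref{lem:G-SE-to-H-SE}) --- trivial here only because freeness on a mixing full shift forces $H=G$, which you do not observe; (2) convert SE into a $\GL_\infty(\bbZ[H][t])$ equivalence of $(I-tA')_\infty$ and $(I-tB')_\infty$ via \Cref{thm:k-theory-characterization-se}, normalize determinants using that units of $\bbZ[H][t]$ are units of $\bbZ[H]$, and set $t=1$ to land in $\SL_\infty(\bbZ[H])$; and (3) invoke Oliver's K-theoretic theorem that $\SL_\infty(\bbZ[H])=\EL_\infty(\bbZ[H])$ for cyclic $H$. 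Step (3) is the essential point your proposal skips: $\EL_\infty$ equivalence is strictly finer than cokernel isomorphism (or even $\GL_\infty$ equivalence) when $SK_1(\bbZ[H])$ is nontrivial, which is also why your claimed generalization to arbitrary finite $G$ is not justified --- the paper only obtains it for cyclic groups (and the other groups covered by Oliver's condition).
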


We will show in \Cref{thm:SE-implies-FE}
that at least for cyclic groups and irreducible free $G$-SFTs,
equivariant shift equivalence
already implies equivariant flow equivalence.
As a consequence we obtain \Cref{cor:flow-equ} as a generalization of
\Cref{thm:flow-equ}. This echos the recent result by Boyle
\cite{boyleShiftEquivalenceImplies2024}
that shift equivalence implies flow equivalence between SFTs in the ordinary non-equivariant
setting even without the irreducibility assumption.

A complete set of invariants for $G$-invariant
flow equivalence in the irreducible case was given by Boyle and Sullivan
already in \cite{boyleEquivariantFlowEquivalence2005}.

Their main result is the following theorem. All relevant notions
will be briefly defined afterwards. For a slightly simplified version treating the case of irreducible extensions see
\cite[Theorem 8.5]{boyleMappingClassGroup2018}.
\begin{thm}[{\cite[Theorem 6.4]{boyleEquivariantFlowEquivalence2005}}]
  \label{thm:equivariant-flow-equivalence-characterization}
  Let $G$ be a finite group. Let $A$ and $B$ be irreducible square
  matrices over $\bbZP[G]$. Assume that
  $A$ and $B$ have the same weight class. Let $H$
  be a group in the weight class. Let
  $A'$ and $B'$ be square matrices over $\bbZ_+[H]$
  which are SSE over $\bbZ_+[G]$ to $A$ and $B$
  respectively. Then the following are equivalent:
  \begin{enumerate}[(i)]
  \item $(X_{\calE(A)},\sigma,\alpha_A)$ and $(X_{\calE(B)},\sigma,\alpha_B)$
    are $G$-flow equivalent.
  \item There exists $g \in G$ such that $g H g^{-1} =
    H$
    and there is an $\EL_\infty(\bbZ[H])$ equivalence from
    $(I-A')_\infty$ to $(I-g B' g^{-1})_\infty$, i.e.
    there are $U,V \in \EL_\infty(\bbZ[H])$ such that
    \[U(I-A')_\infty V=(I-g B' g^{-1})_\infty.\]
    \label{cond:equ-flow-eq-chara-condition}
  \end{enumerate}
\end{thm}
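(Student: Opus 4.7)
My plan is to adapt the Franks-Parry-Sullivan classification of flow equivalence, together with Boyle and Handelman's positive $K$-theoretic framework, to the equivariant setting. The weight class reduction already built into the statement allows us to represent both $G$-SFTs by matrices $A', B'$ over the smaller ring $\bbZP[H]$; after this reduction the task becomes comparing these matrices up to the elementary moves generating $G$-equivariant flow equivalence. I would attack the two implications separately.

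For $(i) \Rightarrow (ii)$, I would decompose a $G$-equivariant flow equivalence into its elementary generating moves---equivariant elementary strong shift equivalence on the one hand, and equivariant symbol expansion/contraction (the Parry-Sullivan move) on the other---and verify directly that each preserves the $\EL_\infty(\bbZ[H])$ equivalence class of $(I-A')_\infty$. For equivariant $\SSE$ with $A=RS$, $B=SR$ over $\bbZP[H]$ one uses the standard block elementary identity showing that $(I-RS) \oplus I_n$ and $I_m \oplus (I-SR)$ are $\EL_\infty(\bbZ[H])$-equivalent, and the symbol expansion move lifts to an obvious elementary row operation on $I-A'$.

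The main obstacle is the reverse direction $(ii) \Rightarrow (i)$. Here one must realize a given $\EL_\infty(\bbZ[H])$-equivalence by a finite sequence of positive equivariant flow equivalence moves. The standard approach, going back to Boyle-Handelman in the non-equivariant case, is \emph{positive factorization}: factor the matrices $U, V$ furnished by (ii) as products of elementary matrices $E_{i,j}(r)$ with $r \in \bbZ[H]$ and then absorb any negative coefficients by sufficient stabilization with diagonal ones, so that each elementary step becomes a legitimate equivariant state splitting or Parry-Sullivan expansion on some representative of the equivariant flow equivalence class. Keeping everything inside $\bbZP[H]$ while maintaining irreducibility at each step is the delicate part; this is precisely where the irreducibility hypothesis and the weight class apparatus do essential work, ensuring that the positive factorization can be carried out without leaving the ambient matrix class.

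The conjugation by $g$ with $gHg^{-1}=H$ appearing in \eqref{cond:equ-flow-eq-chara-condition} reflects that the representative $H$ of the weight class is only canonical up to $G$-conjugacy, so the identification of $B'$ as a matrix over $\bbZ[H]$ is ambiguous by exactly an element of the normalizer $N_G(H)$; this ambiguity is absorbed by the factor $g$. One should check throughout that the positive factorization step respects this normalizer action, which is automatic once one works with matrices only up to the ambiguity that equivariant flow equivalence itself already tolerates.
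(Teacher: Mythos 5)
First, a point of order: the paper does not prove this statement at all --- it is quoted verbatim as Theorem 6.4 of \cite{boyleEquivariantFlowEquivalence2005} and used as a black box, so there is no internal proof to compare your attempt against. That said, your outline does track the actual strategy of Boyle and Sullivan: reduce to the weight group $H$, translate $G$-flow equivalence into positive equivalence of $(I-A)_\infty$ over $\bbZ[G]$, and then prove a factorization theorem asserting that an $\EL_\infty(\bbZ[H])$ equivalence between $(I-A')_\infty$ and $(I-gB'g^{-1})_\infty$ can be realized by a chain of positive elementary moves.

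As a proof, however, your proposal has a genuine gap exactly where you yourself locate the difficulty. The claim that one can \enquote{absorb any negative coefficients by sufficient stabilization with diagonal ones} is not an argument: an elementary factor $E_{i,j}(r)$ with $r\in\bbZ[H]$ carrying negative coefficients is not a positive equivalence and cannot be turned into one by padding with identity blocks. The Factorization Theorem of Boyle--Sullivan --- the technical heart of their paper, occupying several sections --- rewrites such an equivalence as a product of genuinely positive elementary equivalences; it uses irreducibility to manufacture positive entries in prescribed positions, keeps careful track of which entries of $I-A'$ are available for positive row and column operations at each stage, and handles the weight data separately so that one never leaves $\bbZP[H]$. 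None of that is supplied by your sketch. Two further gaps: in $(i)\Rightarrow(ii)$ you implicitly assume that $G$-flow equivalence is generated by equivariant SSE together with equivariant symbol expansion, which is itself a theorem (the equivariant Parry--Sullivan decomposition) requiring proof; and the conjugating element $g$ with $gHg^{-1}=H$ is not \enquote{automatic} --- it must be extracted from the given flow equivalence by tracking how the two weight-reductions of $A$ and $B$ to matrices over $\bbZ[H]$ are identified, which is a concrete lemma in Boyle--Sullivan rather than a formality.
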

Here are the necessary definition to make sense of this theorem:
For a matrix $A \in \bbZ[G]^{V \times V}$ and
$i \in V$ define $W_i(A)$
as the subgroup of $G$ consisting
of elements
of the form $\{g \in G \setsep \exists n \in \bbN:
\pi_g(A^n_{i,i})>0\}$.
These are all elements of $G$ which appear
as products over the labels along a cycle based at $i$ in the graph
defined by $A$.
We call such a product the \emph{weight} of the cycle.
The \emph{weight class} of $A$ is then defined as the conjugacy class
of $W_i(A)$ and this conjugacy class is independent of the choice of
$i$.
Recall (see e.g. \cite[Definition 3.1]{boyleEquivariantFlowEquivalence2005})
that a square matrix $A \in \bbZP[G]^{V \times V}$ is \emph{irreducible},
if for every $i,j \in  V$ there is $k \in \bbN$ such that $A^k_{i,j}
\neq 0$.
For a matrix $A \in \bbZ[G]^{n \times n}$
we define $A_\infty$ as the $\bbN \times \bbN$ matrix
with
\begin{align*}
  (A_\infty)_{i,j} =\begin{cases}
    A_{i,j} &\text{ if } i,j \leq n\\
    0 &\text{ if } \max(i,j)>n,\; i\neq j \\
    1 &\text{ if } \max(i,j)>n,\; i=j
  \end{cases}
\end{align*}
In other words, we take the  $\bbN \times \bbN$ identity matrix
and replace the upper left $n \times n$ block by $A$.
For $z \in \bbZ[G], i,j \in \bbN, i \neq j$ we denote by $E_{i,j}(z)$
the $\bbN \times \bbN$ matrix which
agrees with the identity matrix everywhere besides the entry at
position $i,j$,
where it equals $z$.
We say that two square matrices $A$ and $B$ over
$\bbZP[G]$ are \emph{elementarily positively equivalent}
if there are $g \in G$
and $i,j \in \bbN$ with
$\pi_g(A_{i,j})>0$ and $(I-B)_\infty=E_{i,j}(g)(I-A)_\infty$
or $(I-B)_\infty=(I-A)_\infty E_{i,j}(g)$.
We call the equivalence relation on square matrices
over $\bbZP[G]$ generated
by this notion \emph{positive equivalence}.
The stabilized general linear group
over $\bbZ[G]$ is defined as
$\GL_\infty(\bbZ[G]):= \{G_\infty \setsep G \in \GL_n(\bbZ[G])\}$.
The subgroup generated by the elementary matrices $\{E_{i,j}(z)
\setsep z \in \bbZ[G],\; i,j \in \bbN\}$
is denoted by $\EL_\infty(\bbZ[G])$.

In addition to the results appearing in
\cite{boyleFlowEquivalenceGSFTs2020a}
we also need the following two lemmas concerning the weight class.
\begin{lem}[Shift equivalent matrices have the same weight class]
  \label{lem:se-have-same-weight-class}
  Let $A$ and $B$ be irreducible square matrices over $\bbZP[G]$.
  If $A$ and $B$ are shift equivalent over $\bbZP[G]$, then $W(A)=W(B)$.
\end{lem}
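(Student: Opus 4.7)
The plan is to exhibit a single element $r \in G$ that conjugates $W_{j_0}(B)$ onto $W_{i_0}(A)$ for suitably chosen indices $i_0, j_0$, by exploiting the path-concatenation content of the shift equivalence identities $A^\ell = RS$, $B^\ell = SR$, $AR = RB$, $SA = BS$. The two consequences that will do almost all the work are
\begin{align*}
  A^{k+\ell} = R B^k S, \qquad B^{k+\ell} = S A^k R,
\end{align*}
which follow by induction from $AR=RB$ and $SA=BS$ together with $A^\ell=RS$ and $B^\ell=SR$.

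The first step is to arrange that $A^\ell_{i,i} \neq 0$ for every index $i$. Here I would observe that if $(R,S)$ is an SE of lag $\ell$, then $(R, B^{(m-1)\ell}S)$ is an SE of lag $m\ell$ for every $m \in \bbN$, a short verification using $SR=B^\ell$ and $SA=BS$. Because $\calA(A^k) = \calA(A)^k$ by \Cref{lem:compatibility} and all coefficients are non-negative, $A$ irreducible over $\bbZP[G]$ forces $\calA(A)$ to be irreducible over $\bbZP$. Passing to a large enough multiple of $\ell$ divisible by the period of $\calA(A)$, I may therefore assume $A^\ell_{i,i} \neq 0$ for every $i$.

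Now fix any index $i_0$. Since $A^\ell_{i_0,i_0} = \sum_j R_{i_0,j} S_{j,i_0}$ is a nonzero element of $\bbZP[G]$, there exist $j_0$ and group elements $r \in \supp R_{i_0,j_0}$, $s \in \supp S_{j_0,i_0}$; as coefficients cannot cancel, $\pi_{rs}(A^\ell_{i_0,i_0}) > 0$, so $rs \in W_{i_0}(A)$. For any $g \in W_{j_0}(B)$ with $\pi_g(B^k_{j_0,j_0}) > 0$, expanding $A^{k+\ell}_{i_0,i_0} = \sum_{j,m} R_{i_0,j} B^k_{j,m} S_{m,i_0}$ gives $\pi_{rgs}(A^{k+\ell}_{i_0,i_0}) > 0$, i.e. $rgs \in W_{i_0}(A)$. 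Since $W_{i_0}(A)$ is a subgroup (inverses are free because $G$ is finite), $rgr^{-1} = (rgs)(rs)^{-1} \in W_{i_0}(A)$, whence $r W_{j_0}(B) r^{-1} \subseteq W_{i_0}(A)$. The symmetric computation via $B^{k+\ell} = S A^k R$ yields $s W_{i_0}(A) s^{-1} \subseteq W_{j_0}(B)$. Chaining the two inclusions gives
\begin{align*}
  W_{i_0}(A) \supseteq r W_{j_0}(B) r^{-1} \supseteq (rs) W_{i_0}(A) (rs)^{-1} = W_{i_0}(A),
\end{align*}
where the last equality is inner conjugation by $rs \in W_{i_0}(A)$. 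All inclusions are therefore equalities, so $W_{i_0}(A)$ and $W_{j_0}(B)$ are conjugate subgroups of $G$ and $W(A) = W(B)$.

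The only genuine obstacle is the existence of an index $i_0$ with $A^\ell_{i_0,i_0} \neq 0$, so that the pair $(r,s)$ is simultaneously available in $\supp R_{i_0,j_0}$ and $\supp S_{j_0,i_0}$; this is what forces the preliminary enlargement of the lag. The rest is bookkeeping with the four SE identities and the elementary fact that a non-empty multiplicatively closed subset of a finite group is automatically a subgroup.
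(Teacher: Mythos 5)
Your proof is correct, and it takes a genuinely different route from the paper. The paper's proof is a reduction: shift equivalence of lag $\ell$ makes $A^{n}$ and $B^{n}$ elementary SSE for $n\ge\ell$, so one passes to the powers $A^{k|G|+1}$, $B^{k|G|+1}$, invokes the Boyle--Sullivan results on SSE-invariance of the weight class, and then recovers $W(A)=W(A^{k|G|+1})$ from the identity $g^{k|G|+1}=g$ in $G$. You instead work directly with the shift equivalence data: after enlarging the lag via $(R,\,B^{(m-1)\ell}S)$ so that some diagonal entry $A^\ell_{i_0,i_0}$ is nonzero (which is genuinely needed, since an irreducible matrix of period $p>1$ has zero diagonal in all powers not divisible by $p$), you track supports through $A^{k+\ell}=RB^kS$ and $B^{k+\ell}=SA^kR$ to produce an explicit conjugator $r$ with $W_{i_0}(A)=rW_{j_0}(B)r^{-1}$; the non-negativity of the coefficients rules out cancellation, and the two-sided inclusion is closed up by conjugating by the element $rs\in W_{i_0}(A)$. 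All the individual steps check out: the modified pair is indeed an SE of lag $m\ell$, irreducibility of $A$ over $\bbZP[G]$ is equivalent to irreducibility of $\calA(A)$ by \Cref{lem:compatibility}, and $W_i(\cdot)$ is a subgroup because a non-empty multiplicatively closed subset of a finite group is one. What the paper's route buys is brevity given the cited machinery; what yours buys is a self-contained argument that avoids the SSE-invariance citation entirely and is very much in the spirit of the support-tracking proof of \Cref{lem:G-SE-to-H-SE}, even yielding the conjugating group element explicitly.
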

\begin{proof}
  If $A$ and $B$ are shift equivalent, then there is $k \in \bbN$ such
  that
  $A':=A^{k|G|+1}$ and $B':=B^{k|G|+1}$ are irreducible and strong shift equivalent
  and therefore $W(A')=W(B')$ by
  \cite[Proposition 2.7.1, Theorem 3.3 and Proposition
  4.2]{boyleEquivariantFlowEquivalence2005}.
  It remains to show that $W(A)=W(A')$ and $W(B)=W(B')$.
  It follows directly from the definition
  that every weight of a cycle in $A'$ based at $i$
  also appears as a weight of a cycle based at $i$ in $A$.
  Therefore $W_i(A') \subseteq W_i(A)$.
  Now let $g$ be an element of $G$ with
  $\pi_g(A^\ell_{i,i}) > 0$ for some $\ell \in \bbN$.
  Then $g = g^{k|G|+1}$ and so $\pi_g(A^{\ell(k|G|+1)})_{ii}\neq 0$.
  Therefore $g \in W_i(A')$ and thus $W(A)=W(A')$.
  The same argument applies to $B$ and $B'$.
\end{proof}

\begin{lem}[Shift equivalence over some subring]
  \label{lem:G-SE-to-H-SE}
  Let $G$ be a finite group and
  let $H$ be a normal subgroup of $G$. Let $A$, $B$ be irreducible
  square matrices over $\bbZP[H]$.
  If $A$ and $B$ are shift equivalent over
  $\bbZP[G]$, there is $g \in G$ such that $A$ and $gBg^{-1}$ are shift equivalent over $\bbZP[H]$.
\end{lem}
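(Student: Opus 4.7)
The plan is to decompose the given shift equivalence over $\bbZP[G]$ by the cosets of $H$ in $G$ and to use the irreducibility of $A$ and $B$, together with the absence of zero divisors in $\bbZP[H]$, to show that all but one coset contributes trivially; the surviving coset then yields the desired shift equivalence over $\bbZP[H]$.

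First I would fix a transversal $T \subseteq G$ of $G/H$ containing the identity and write $R = \sum_{t \in T} R_t t$ and $S = \sum_{t \in T} S_t t$ with $R_t, S_t$ matrices over $\bbZP[H]$. Because $H$ is normal, $tb = (tbt^{-1})t$ for $b \in \bbZP[H]$, so the intertwining relations $AR = RB$ and $SA = BS$ decompose coset by coset into $AR_t = R_t(tBt^{-1})$ and $BS_t = S_t(tAt^{-1})$ for every $t \in T$. I would then expand $RS = \sum_{t,t' \in T} R_t (t S_{t'} t^{-1})(tt')$ and group the terms by the coset of $tt'$ in $G/H$. Since $A^\ell \in \bbZP[H]^{V \times V}$ and every coefficient is non-negative, the piece indexed by $uH$ must vanish identically for each $u \notin H$. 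Using the fact that $\bbZP[H]$ has no zero divisors, this forces the orthogonality
\[ R_t \cdot (t S_{t'} t^{-1}) = 0 \quad\text{whenever}\quad tt' \notin H, \]
and the analogous statement $S_{t'} \cdot (t' R_t t'^{-1}) = 0$ whenever $t't \notin H$, obtained from $SR = B^\ell$.

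Next I would pin down the supports using irreducibility. Iterating $A^n R_t = R_t(tBt^{-1})^n$ and passing to the $\bbZ$-augmentation, a standard argument shows that whenever $R_t \neq 0$ every row and every column of $R_t$ contains a nonzero entry in $\bbZP[H]$; the same holds for $S_{t'}$. Combined with the orthogonality, this implies that if $R_t \neq 0$ then $S_{t'} = 0$ for every $t' \neq \iota(t)$, where $\iota : T \to T$ is the involution sending $t$ to the representative of $t^{-1}H$. A symmetric argument from any $S_{t'} \neq 0$ eliminates all $R_s$ with $s \neq \iota(t')$. Because $RS = A^\ell \neq 0$, these two constraints force both decompositions to be supported on a matched pair: there is a unique $t_0 \in T$ with $R_{t_0} \neq 0$, and $S_{t_1} \neq 0$ for $t_1 := \iota(t_0)$.

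To extract the shift equivalence I would set $g := t_0$, $h_0 := t_0 t_1 \in H$, and $\tilde S := (t_0 S_{t_1} t_0^{-1}) h_0 \in \bbZP[H]^{W \times V}$. Using the surviving intertwining relations $AR_{t_0} = R_{t_0}(gBg^{-1})$ and $BS_{t_1} = S_{t_1}(t_1 A t_1^{-1})$ together with the $H$-parts of $RS = A^\ell$ and $SR = B^\ell$, a direct calculation verifies
\[ AR_{t_0} = R_{t_0}(gBg^{-1}), \quad \tilde S A = (gBg^{-1})\tilde S, \quad R_{t_0} \tilde S = A^\ell, \quad \tilde S R_{t_0} = (gBg^{-1})^\ell, \]
so $(R_{t_0}, \tilde S)$ is a shift equivalence of lag $\ell$ over $\bbZP[H]$ between $A$ and $gBg^{-1}$. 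I expect the hardest step to be the simultaneous use of no zero divisors and irreducibility to collapse the supports of $R$ and $S$ to a single coset; once that is done, the final SE verification is essentially bookkeeping of scalars under conjugation by $t_0$.
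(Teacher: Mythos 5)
Your proof is correct, but it takes a genuinely different and more structural route than the paper's. Both arguments ultimately establish the same underlying fact --- that $R$ and $S$ are each supported on a single coset of $H$, matched to one another --- and both rest on the same three ingredients: normality of $H$, irreducibility (resp.\ essentiality), and the absence of cancellation among non-negative coefficients. The paper gets there much more directly: it fixes a single $g$ with $\pi_g(R_{i,j})\neq 0$ for some entry, and for arbitrary indices $s,t$ observes that $gS_{s,t}A^k_{t,i}R_{i,j}g^{-1}$ is a non-negative summand of an entry of $gSA^kRg^{-1}=gB^{k+\ell}g^{-1}$, which lies over $\bbZP[H]$; since $R_{i,j}g^{-1}$ contains $1_G$ as a summand and $A^k_{t,i}$ is a nonzero element of $\bbZP[H]$, this forces $gS_{s,t}\in\bbZP[H]$ entry by entry, and a symmetric argument gives $R_{t,s}g^{-1}\in\bbZP[H]$, so $(Rg^{-1},gS)$ is already the desired shift equivalence. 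Your coset decomposition, the orthogonality relations $R_t(tS_{t'}t^{-1})=0$ for $tt'\notin H$, and the support-collapsing argument via irreducibility reach the same conclusion with more bookkeeping, but they make the single-coset structure of $R$ and $S$ explicit, and your final pair $(R_{t_0},\tilde S)$ agrees with the paper's up to the harmless factor $h_0\in H$. Two small remarks: the vanishing of the off-coset pieces of $RS$ already follows from non-negativity alone (the ``no zero divisors'' property of $\bbZP[H]$ is only needed in the later step where a nonzero row of $R_t$ meets a nonzero column of $tS_{t'}t^{-1}$), and your row/column nonvanishing claim for $R_t$ uses irreducibility of both $A$ and $B$, whereas the paper gets by with essentiality of $R$ and $S$ plus irreducibility of $A$.
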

\begin{proof}
  Let $R,S$ be a shift equivalence of lag $\ell$ between $A \in
  \bbZP[H]^{n \times n}$ and $B \in \bbZP[H]^{m \times m}$ over
  $\bbZP[G]$.
  In particular $A^\ell=RS$ and $B^\ell=SR$.
   Since $A$ and $B$ are essential, so are $R$ and $S$.
  There is an element $g \in G$ and indices $i,j$ 
  such that $\pi_g(R_{i,j})\neq 0$.

  Now let $s \in \{1,\dots,m\}$ and $t \in \{1,\dots,n\}$ be arbitrary.
  There is $k \in \bbN$ such that $A^k_{t,i} \neq 0$.
  Since $gSA^kRg^{-1}=gB^{k+\ell}g^{-1}$ contains
  only entries over $\bbZP[gHg^{-1}]=\bbZP[H]$,
  the same is true for $g S_{s,t} A^k_{t,i} R_{i,j}g^{-1}$.
  The factor $R_{i,j}g^{-1}$ contains $1_G$ as a summand,
  so we have $gS_{s,t} \in \bbZP[H]$.
  Now either $R_{t,s}=0$ or there is $q\in \{1,\dots,m\}$ such that $g
  S_{q,t} R_{t,s}g^{-1} \neq 0$
  since $S$ is essential. In both cases we have $R_{t,s}g^{-1} \in \bbZP[H]$.
  
  Therefore the pair $(Rg^{-1},g S)$ establishes a shift equivalence over
  $\bbZ_+[H]$
  between $A$ and $g B g^{-1}$.
\end{proof}

We now show that the conditions 
of \Cref{thm:equivariant-flow-equivalence-characterization}
are met when $A$ and $B$ satisfy the conditions
of \Cref{thm:flow-equ}, thus showing that in this case
equivariant shift equivalence implies
equivariant flow equivalence.
To do so, we need another characterization of shift equivalence
coming from the \enquote{positive K-theory} framework.
\begin{thm}[Theorem 6.2 \cite{boyleStrongShiftEquivalence2016}]
  \label{thm:k-theory-characterization-se}
  Let $A,B$ be square matrices over a ring $\mathcal{R}$. Then
  $A$ and $B$ are shift equivalent over $\calR$ iff and only iff
  $(I-tA)_{\infty}$ and $(I-tB)_{\infty}$ are $\GL_\infty(\calR[t])$
  equivalent, i.e. there are $U,V \in \GL_\infty(\calR[t])$
  such that
  \begin{align*}
    U(t) (I-tA)_\infty V(t) = (I-tB)_\infty.
  \end{align*}
\end{thm}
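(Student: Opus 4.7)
The plan is to prove both directions using the block matrix framework of ``positive $K$-theory'' developed by Boyle and Wagoner. The forward implication is the easier, constructive direction; the converse requires extracting shift equivalence data from an abstract $\GL_\infty(\calR[t])$-equivalence.

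For the direction ($\Rightarrow$), suppose $A,B$ are shift equivalent with lag $\ell$ witnessed by $R,S$, so that $A^\ell=RS$, $B^\ell=SR$, $AR=RB$, $SA=BS$. The idea is to build $U(t),V(t)\in\GL_\infty(\calR[t])$ as finite products of elementary row and column operations with entries in $\calR[t]$. The starting point is that the intertwining $AR=RB$ rewrites as $(I-tA)R=R(I-tB)$ and dually $S(I-tA)=(I-tB)S$; these identities let one push $R$ and $S$ through factors of $(I-tA)$ and $(I-tB)$ in a block-matrix calculation. The lag enters through $A^\ell=RS$ and $B^\ell=SR$, together with the Whitehead-lemma identity that $\operatorname{diag}(I-t^\ell RS,\,I)$ and $\operatorname{diag}(I,\,I-t^\ell SR)$ are $\EL_\infty(\calR[t])$-equivalent via an explicit product of elementary matrices. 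Chaining these block operations in the stabilized setting produces the required $U$ and $V$.

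For the direction ($\Leftarrow$), start from $U(t)(I-tA)_\infty V(t)=(I-tB)_\infty$ with $U,V\in\GL_\infty(\calR[t])$ and expand $U(t)=\sum_k U_k t^k$, $V(t)=\sum_k V_k t^k$. After first normalizing so that the constant terms $U_0,V_0$ are the identity (a further $\EL_\infty$-modification, possible after stabilization), comparing the coefficient of $t$ on both sides yields relations that couple $U_1,V_1$ with $A$ and $B$. Suitable truncations of these linear coefficients provide candidates $R,S$ for the shift equivalence data, while the higher-degree coefficients of the expansion force the remaining relations, with the lag $\ell$ controlled by the total $t$-degree of $U$ and $V$.

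The hardest step is the converse: one must ensure that the $R$ and $S$ extracted from the expansion actually live in the original (unstabilized) ranks of $A$ and $B$, and that a common lag $\ell$ works for all four shift equivalence equations simultaneously. This is a genuinely $K$-theoretic issue: the stabilization in $\GL_\infty$ has to be peeled off carefully, and the polynomial degrees of the $U_k,V_k$ must be tracked to produce a uniform bound on the lag. Since the full argument is carried out in \cite{boyleStrongShiftEquivalence2016}, in practice we would invoke that reference directly rather than reproduce the lengthy block calculation.
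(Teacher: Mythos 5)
The paper offers no proof of this statement: it is imported verbatim as Theorem 6.2 of \cite{boyleStrongShiftEquivalence2016}, and your proposal ultimately does the same by deferring to that reference, so the two approaches coincide. One caution: the sketch you give for the converse (normalizing $U_0=V_0=I$ and extracting $R,S$ from the degree-one coefficients) is not how the argument in that reference actually runs and would not work as stated --- comparing coefficients of $t$ under that normalization only yields $U_1+V_1=A-B$, which carries no intertwining data --- so your sketch should not be mistaken for an outline of the real proof.
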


\begin{thm}
  \label{thm:SE-implies-FE}
  Let $G=\bbZ/n\bbZ$ be a cyclic group.
  Let $A,B$ be irreducible square matrices over $\bbZP[G]$.
  If $A$ and $B$ are shift equivalent over $\bbZ_+[G]$,
  then $(X_A,\sigma,\alpha_A)$ and $(X_B,\sigma,\alpha_B)$ are
  $G$-flow equivalent.
\end{thm}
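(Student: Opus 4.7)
The plan is to verify the hypotheses of the Boyle--Sullivan characterization \Cref{thm:equivariant-flow-equivalence-characterization} of $G$-flow equivalence. The preparatory lemmas \Cref{lem:se-have-same-weight-class} and \Cref{lem:G-SE-to-H-SE} together with \Cref{thm:k-theory-characterization-se} will let me reduce to a purely algebraic statement about $K_1(\bbZ[H])$ for $H$ a cyclic subgroup of $G$.

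First, \Cref{lem:se-have-same-weight-class} gives $W(A)=W(B)$; pick a subgroup $H$ in this common weight class. A standard fact in the Boyle--Sullivan framework lets me replace $A,B$ with matrices $A',B'$ over $\bbZP[H]$ that are SSE over $\bbZP[G]$ to $A$ and $B$. Since SSE over $\bbZP[G]$ yields $G$-conjugacy (\Cref{thm:gr-se-eventual-conjugacy}) and hence $G$-flow equivalence, it suffices to prove the theorem for $A',B'$. These remain SE over $\bbZP[G]$, and since $G$ is abelian $H$ is normal, so \Cref{lem:G-SE-to-H-SE} supplies $g \in G$ with $A'$ SE over $\bbZP[H]$ to $gB'g^{-1}=B'$ (conjugation is trivial in an abelian group). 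Thus $A',B'$ are SE over $\bbZP[H]$, and the condition $gHg^{-1}=H$ in \Cref{thm:equivariant-flow-equivalence-characterization} is automatic; what remains is to exhibit an $\EL_\infty(\bbZ[H])$-equivalence between $(I-A')_\infty$ and $(I-B')_\infty$.

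Second, apply \Cref{thm:k-theory-characterization-se}: the SE of $A',B'$ over $\bbZP[H]$ produces $U(t),V(t) \in \GL_\infty(\bbZ[H][t])$ with $U(t)(I-tA')_\infty V(t)=(I-tB')_\infty$. Since $H$ is finite cyclic, $\bbZ[H]$ is a commutative reduced ring (it embeds into $\prod_{d\mid|H|}\bbZ[\zeta_d]$), so $\bbZ[H][t]^\times = \bbZ[H]^\times$ consists of constants. Taking determinants, $\det U(t)\det V(t) \in \bbZ[H]^\times$ is constant in $t$; evaluating at $t=0$, where $U(0)V(0)=I$, gives the value $1$. Hence $\det U(1)\det V(1)=1$ and in particular $\det(I-A')=\det(I-B')$ in $\bbZ[H]^\times$, so the specialization at $t=1$ is a $\GL_\infty(\bbZ[H])$-equivalence whose combined determinant is trivial.

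The main obstacle is the last step: promoting this $\GL_\infty(\bbZ[H])$-equivalence of trivial combined determinant to an actual $\EL_\infty(\bbZ[H])$-equivalence. For cyclic $H$, a theorem of Bass gives $SK_1(\bbZ[H])=0$, so the $K_1$-obstruction to $\EL_\infty$-equivalence is detected purely by determinants, and the determinant discrepancy $\det U(1)$ must be absorbed between the two factors. I would accomplish this by a careful redistribution of the unit $\det U(1)$ using either diagonal automorphisms of the stabilized forms or a further positive-equivalence adjustment of $A'$ or $B'$ (which preserves $G$-flow equivalence), leveraging the concrete structure of $(I-A')_\infty$ as a finite perturbation of the infinite identity. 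Once the $\EL_\infty$-equivalence is obtained, \Cref{thm:equivariant-flow-equivalence-characterization} delivers the $G$-flow equivalence of $(X_A,\sigma,\alpha_A)$ and $(X_B,\sigma,\alpha_B)$, completing the proof.
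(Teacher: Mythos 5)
Your proposal follows essentially the same route as the paper's proof: reduce to matrices over $\bbZP[H]$ for $H$ in the common weight class, pass to $\bbZP[H]$-shift equivalence via \Cref{lem:G-SE-to-H-SE}, apply \Cref{thm:k-theory-characterization-se}, use that $\bbZ[H]$ is reduced so units of $\bbZ[H][t]$ are constant to get combined determinant $1$ at $t=1$, and invoke the vanishing of $SK_1(\bbZ[H])$ (the paper cites Oliver for $\SL_\infty(\bbZ[H])=\EL_\infty(\bbZ[H])$) to land in condition \eqref{cond:equ-flow-eq-chara-condition} of \Cref{thm:equivariant-flow-equivalence-characterization}. The final ``redistribution'' step you leave slightly open is handled in the paper simply by absorbing the scalar unit $u(t)=\det U(t)$ into $U$ and $V$ (using $u(t)v(t)=1$), which closes your remaining gap with no new ideas.
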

\begin{proof}
  Let $A$ and $B$ be shift equivalent matrices
  as above.
  By \Cref{lem:se-have-same-weight-class}
  they have the same weight class.
  Let $H$ be a group in this weight class.
  Since $G$ is Abelian, $H$ is normal.
  Then by \cite[Proposition 4.4]{boyleEquivariantFlowEquivalence2005}
  there are diagonal matrices $C,D$ with
  diagonal entries in $G$ such that
  $A' := DAD^{-1}$ and $B' :=CBC^{-1}$
  are matrices over $\bbZP[H]$.
  $A$ and $A'$ are strongly shift equivalent over $\bbZ_+[G]$.
  The same holds for $B'$ and $B$.
  Additionally $A, A', B$ and $B'$ are all
  shift equivalent to each other over $\bbZP[G]$.
  
  Therefore $A'$ and $B'$ are also shift equivalent over $\bbZP[H]$
  by \Cref{lem:G-SE-to-H-SE}.
  
  By \Cref{thm:k-theory-characterization-se} we know that
  there are matrices $U,V \in \GL_\infty(\bbZ[H][t])$
  such that
  \begin{align*}
    U(t) (I-tA')_\infty V(t) = (I-tB')_\infty.
  \end{align*}
  We now have to upgrade this $\GL_\infty(\bbZ[H][t])$-equivalence
  to an $\SL_\infty(\bbZ[H][t])$-equivalence.
  Since $G$ and hence also $H$ are Abelian by assumption, both $\bbZ[H]$
  and $\bbZ[H][t]$ are commutative rings.
  Therefore we can take the determinant and, setting
  $u(t) := \det U(t)$, $v(t) := \det V(t)$, we see that $u(t)$ and $v(t)$
  are units in $\bbZ[H][t]$ which satisfy
  \begin{align*}
    u(0)v(0)=1.
  \end{align*}
  Now $\bbZ[H]$ contains no nilpotent elements, which can be seen
  e.g. by applying Wedderburn's theorem to the commutative group algebra
  $\bbC[H]$.
  Therefore the units in $\bbZ[H][t]$ are precisely
  the units in $\bbZ[H]$, as the coefficients of order
  at least one in a unit in a commutative polynomial ring have to be nilpotent,
  see e.g. \cite[Exercise 7.3.33]{dummitAbstractAlgebra2004}.
  Hence $u(t)=u(0)$, $v(t)=v(0)$ and therefore
  $u(t)v(t)=1$. Conjugating our equation above
  by $u(t)$ and setting $t=1$ gives
  \begin{align*}
    u(1)^{-1} U(1) (I-A')_\infty V(1)v(1)^{-1} = (I-B')_\infty.
  \end{align*}
  Both matrices $u(1)^{-1}U(1)$ and $V(1)v(1)^{-1}$ are
  contained in \[\SL_\infty(\bbZ[H]) = \{W \in \GL_\infty(\bbZ[H])
  \setsep \det(W)=1\}.\]
  By a K-theoretic result of Oliver, see \cite[Theorem 14.2 (iii)]{oliverWhiteheadGroupsFinite1988}
  we have $\SL_\infty(\bbZ[H])=\EL_\infty(\bbZ[H])$ for the cyclic group
  $H$.
  Hence $(I-A')_\infty$ and $(I-B')_\infty$ are $\EL_\infty(\bbZ[H])$
  equivalent.
  Therefore \Cref{thm:equivariant-flow-equivalence-characterization}
  \eqref{cond:equ-flow-eq-chara-condition}
  is satisfied and $(X_{\calE(A)},\sigma,\alpha_A)$
  and $(X_{\calE(B)},\sigma,\alpha_B)$
  are $G$-flow equivalent.
\end{proof}

\begin{rem}
  The precise condition
    in Olivier's result for $\SL_\infty(\bbZ[H])=\EL_\infty(\bbZ[H])$
    with $H$ Abelian
    is that either (a) each Sylow subgroup of $H$ has the form
    $\bbZ/p^n \bbZ$ or $\bbZ/p \bbZ \times \bbZ/p^n \bbZ$ or (b) $H \cong (\bbZ/2 \bbZ)^k$.
    So instead of cyclicity  of $G$ one case use this condition on $H$
    in the assumptions of \Cref{thm:SE-implies-FE}.
\end{rem}

\begin{cor}
  \label{cor:flow-equ}
  Let $G=\bbZ/n\bbZ$ be the cyclic group of order $n \in \bbN$.
  Let $X$ be the full $k$-shift for some $k \in \bbN$.
  Let $(X,\sigma,\alpha)$ and $(X,\sigma,\beta)$
  be two free $G$-SFTs. Then there
  is a $G$-equivariant flow equivalence
  between $(X,\sigma,\alpha)$ and $(X,\sigma,\beta)$.
\end{cor}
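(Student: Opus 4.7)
The plan is to reduce the corollary to Theorem~\ref{thm:SE-implies-FE} by assembling results already developed in the paper. First I would use Proposition~\ref{prop:repr-by-group-ring-matrix} to represent the two $G$-SFTs by matrices $B_1,B_2$ over $\bbZP[G]$, so that $(X,\sigma,\alpha)$ and $(X,\sigma,\beta)$ are $G$-conjugate to $(X_{\calE(B_1)},\sigma,\beta_{B_1})$ and $(X_{\calE(B_2)},\sigma,\beta_{B_2})$ respectively.

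Next I would argue that $B_1$ and $B_2$ are inert. As in the proof of Corollary~\ref{thm:main-corr-classic-version}, the automorphism group of the dimension triple of the full $k$-shift is isomorphic to some $\bbZ^n$ and hence torsion free, so every finite order automorphism of the full shift is inert. In particular every element in the image of $\alpha$ or $\beta$ is inert, and by definition this means $B_1$ and $B_2$ are inert matrices over $\bbZP[G]$.

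To invoke Theorem~\ref{thm:main-algebraic-version}, it remains to verify that $\calA(B_1)$ and $\calA(B_2)$ are shift equivalent over $\bbZP$. Since both $\calE(B_i)$ are adjacency matrices of graphs whose edge shifts are conjugate to the full $k$-shift, they are conjugate and therefore strongly shift equivalent, in particular shift equivalent, over $\bbZP$ by Williams' theorem. Combining this with Theorem~\ref{thm:charact-inert}, which tells us that each $\calE(B_i)$ is shift equivalent to $\calA(B_i)$ over $\bbZP$, yields that $\calA(B_1)$ and $\calA(B_2)$ are shift equivalent over $\bbZP$. Theorem~\ref{thm:main-algebraic-version} then produces a shift equivalence of $B_1$ and $B_2$ over $\bbZP[G]$.

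The final step is to apply Theorem~\ref{thm:SE-implies-FE}, whose irreducibility hypothesis I need to verify. Since the full shift is mixing and $G$ acts freely, the orbit space $X/G$ is an irreducible SFT, as a continuous factor of an irreducible subshift; and this orbit space is conjugate to $X_{\calA(B_i)}$ by the construction in Section~\ref{sec:matrices-over-integer-group-rings}. Hence $\calA(B_i)$ is irreducible, and irreducibility of $B_i$ over $\bbZP[G]$ is then immediate from the identity $\calA(B_i^n)=\calA(B_i)^n$ together with the fact that an entry of $B_i^n$ vanishes iff its augmentation does. Theorem~\ref{thm:SE-implies-FE} now furnishes a $G$-flow equivalence between $(X_{\calE(B_1)},\sigma,\beta_{B_1})$ and $(X_{\calE(B_2)},\sigma,\beta_{B_2})$, which via the $G$-conjugacies from the first step transports to a $G$-flow equivalence between $(X,\sigma,\alpha)$ and $(X,\sigma,\beta)$. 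I do not expect any substantial obstacle: the bulk of the work is carried by Theorems~\ref{thm:main-algebraic-version} and \ref{thm:SE-implies-FE}, and the only bookkeeping is the straightforward chain of shift equivalences and the transfer of irreducibility from $\calA(B_i)$ to $B_i$.
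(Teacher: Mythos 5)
Your proposal is correct and follows essentially the same route as the paper, which simply cites \Cref{thm:main-corr-classic-version} together with \Cref{thm:SE-implies-FE}; your argument just unpacks the content of \Cref{thm:main-corr-classic-version} (inertness of finite-order automorphisms of full shifts, shift equivalence of the augmentations, and the lift via \Cref{thm:main-algebraic-version}) rather than citing it. Your explicit verification of the irreducibility hypothesis of \Cref{thm:SE-implies-FE}, which the paper leaves implicit, is accurate.
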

\begin{proof}
  This is a direct corollary of \Cref{thm:main-corr-classic-version}
  and \Cref{thm:SE-implies-FE}. 
\end{proof}

\section*{Acknowledgments}
The author thanks Scott Schmieding and Tom Meyerovitch for stimulating
discussions about $G$-SFTs.

\printbibliography

\end{document}